\documentclass[a4paper,10pt]{article}
\usepackage[utf8x]{inputenc}
\usepackage[dvips]{epsfig}
\usepackage{amsmath,amssymb,amsthm,xspace,float}
\usepackage{amsfonts}
\usepackage{color}
\usepackage[dvips]{graphicx}

\def\ie{{\em i.e.}\xspace}

\newcommand{\C}{\ensuremath{\mathcal{C}}\xspace}
\newcommand{\M}{\ensuremath{\mathcal{M}}\xspace}
\newcommand{\B}{\ensuremath{\mathcal{B}}\xspace}
\newcommand{\T}{\ensuremath{\mathcal{T}}\xspace}
\newcommand{\A}{\ensuremath{\mathcal{A}}\xspace}
\newcommand{\V}{\ensuremath{\mathcal{V}}\xspace}
\newcommand{\R}{\ensuremath{\mathcal{R}}\xspace}
\newcommand{\F}{\ensuremath{\mathcal{F}}\xspace}
\newcommand{\G}{\ensuremath{\mathcal{G}}\xspace}
\renewcommand{\H}{\ensuremath{\mathcal{H}}\xspace}
\newcommand{\K}{\ensuremath{\mathcal{K}}\xspace}
\newcommand{\J}{\ensuremath{\mathcal{J}}\xspace}
\newcommand{\X}{\ensuremath{\mathcal{X}}\xspace}

\newcommand{\sym}{\ensuremath{\mathfrak{S}}\xspace}
\newcommand{\sympattern}{\ensuremath{\preccurlyeq_{\sym}}\xspace}
\newcommand{\poly}{\ensuremath{\mathfrak{P}}\xspace}
\newcommand{\polypattern}{\ensuremath{\preccurlyeq_{\poly}}\xspace}
\newcommand{\matr}{\ensuremath{\mathfrak{M}}\xspace}
\newcommand{\matrpattern}{\ensuremath{\preccurlyeq_{\matr}}\xspace}
\newcommand{\quasip}{\ensuremath{\mathfrak{Q}}\xspace}

\newcommand{\Av}{Av}
\newcommand{\Avperm}{Av_{\mathfrak{S}}}
\newcommand{\Avp}{Av_{\mathfrak{P}}}
\newcommand{\Avm}{Av_{\mathfrak{M}}}
\newcommand{\Avquasip}{Av_{\mathfrak{Q}}}

\newtheorem{thm}{Theorem}
\newtheorem{definition}[thm]{Definition}
\newtheorem{remark}[thm]{Remark}
\newtheorem{proposition}[thm]{Proposition}
\newtheorem{example}[thm]{Example}
\newtheorem{corollary}[thm]{Corollary}

\title{Permutation classes and polyomino classes with excluded submatrices}

\author{Daniela Battaglino \thanks{Dipartimento di Ingegneria dell'Informazione e Scienza Matematiche, Via Roma, 56, 53100,
Siena, Italy {\tt battaglino3@unisi.it,  rinaldi@unisi.it}} \and Mathilde Bouvel \thanks{Institut f\"ur Mathematik, Universit\"at Z\"urich, Winterthurerstrasse 190, CH-8057 Z\"urich, Switzerland {\tt mathilde.bouvel@math.uzh.ch}} \and Andrea Frosini \thanks{Dipartimento di Matematica e Informatica, viale Morgagni 67, 50134, Firenze, Italy {\tt andrea.frosini@unifi.it}} \and  Simone Rinaldi $^*$}

\date{}

\begin{document}

\maketitle

\begin{abstract}
This article introduces an analogue of permutation classes in the context of polyominoes. 
For both permutation classes and polyomino classes, we present an original way of characterizing them by avoidance constraints (namely, with excluded submatrices) and we discuss how canonical such a description by submatrix-avoidance can be.
We provide numerous examples of permutation and polyomino classes which may be defined and studied from the submatrix-avoidance point of view, and conclude with various directions for future research on this topic. 
\end{abstract}

\section{Introduction}

The concept of {\em substructure} (or {\em pattern}) within a {\em combinatorial structure} 
is an essential notion in combinatorics, whose study has had many developments in various branches of discrete mathematics. 
Among them, the research on permutation patterns and pattern-avoiding permutations 
-- starting in the seventies with the work of Knuth~\cite{Kn} -- 
has become a very active field. 
Nowadays, the research on permutation patterns is being developed in several directions.
One of them is to define and study analogues of the concept of pattern in permutations in other combinatorial objects
such as set partitions~\cite{Go,Kl,Sa}, words~\cite{Bj,Bu}, trees~\cite{DPTW,R}, and paths~\cite{ferrari}. 
The work presented here goes into this direction, and is specifically interested in patterns in polyominoes. 

Polyominoes\footnote{The term polyomino was introduced by Golomb in 1953 during a talk at the Harvard Mathematics Club, which was published later~\cite{gol} and popularized by Gardner in 1957 in~\cite{gardner}.} are discrete objects among the most studied in combinatorics, especially from an enumerative point of view. 
Indeed, their enumeration (w.r.t. the area or the semi-perimeter) is a difficult problem, and is still open. 
In order to probe further into their study, 
a large part of the research on polyominoes consists in studying restricted families of polyominoes 
-- defined by imposing geometrical constraints, such convexity and directedness, see~\cite{mbm,DV}. 
Our work provides a general framework to define such families of polyominoes, by the avoidance of patterns. 

Both permutations and polyominoes may be represented in a natural way by (restricted) binary matrices, 
and it is on this simple fact that we will build our notion of pattern in a polyomino. 
Doing so, we have also been led to consider families of permutations and polyominoes that are defined by the avoidance of some patterns that are not themselves permutations or polyominoes, but rather less restricted binary matrices. 
In addition to this being an original way to look at restricted families of polyominoes, 
we believe it also brings new ideas to the research on permutation patterns. 

The article is organized as follows. 
In Section~\ref{sec:def_classes}, we recall the definitions of permutation patterns and permutation classes, 
define polyomino classes, and describe some of their basic properties. 
More details about permutations and patterns may be found in~\cite{bona}, 
and we refer the reader to~\cite{mbm} for the basic definitions on polyominoes. 
Section~\ref{sec:excluded_submatrices} introduces submatrix-avoidance in permutations and polyominoes, 
and examines how a permutation or polyomino class may be characterized by the avoidance of submatrices, 
defining several notions of \emph{matrix-bases} of classes. 
Section~\ref{sec:from_one_basis_to_another} investigates how these matrix-bases are related to the usual basis of a permutation class (and its analogue for a polyomino class). 
In Sections~\ref{sec:known_classes_perm} and~\ref{sec:known_classes_poly}, 
we provide several examples of permutation and polyomino classes, both previously studied and new, 
that can be studied with our submatrix-avoidance approach. 
Finally, we open directions for future work in Section~\ref{sec:further_research}. 

\section{Permutation classes and polyomino classes}
\label{sec:def_classes}

\subsection{Permutation patterns and permutation classes}

For any integer $n$, let us denote by $\sym_n$ the set of all permutations of the set $\{1,2, \ldots, n\}$.
The integer $n$ is called the \emph{size} of a permutation in $\sym_n$.
We denote by $\sym = \cup_{n \in \mathbb{N}}\sym_n$ the set of all permutations.
We write permutations in one-line notation $\sigma = \sigma_1 \sigma_2 \ldots \sigma_n$, meaning for instance that $\sigma = 53142$ is the permutation of $\sym_5$ such that
$\sigma(1) =5$, $\sigma(2) =3$, \ldots, $\sigma(5) =2$.

\begin{definition}
Given two permutations $\sigma = \sigma_1 \sigma_2 \ldots \sigma_n$ and $\pi = \pi_1 \pi_2 \ldots \pi_k$,
$\pi$ is a \emph{pattern} of $\sigma$ (denoted $\pi \sympattern \sigma$) when there exist indices $1 \leq i_1 < i_2 < \ldots < i_k \leq n$ such that
the sequence $\sigma_{i_1} \sigma_{i_2} \ldots \sigma_{i_k}$ is order-isomorphic to $\pi$.
Such sequences $\sigma_{i_1} \sigma_{i_2} \ldots \sigma_{i_k}$ are called \emph{occurrences} of $\pi$ in $\sigma$.
\label{def:permutation_pattern}
\end{definition}

A permutation $\pi$ which is a pattern of a permutation $\sigma$ is said to be \emph{contained} or \emph{involved} in $\sigma$.
A permutation $\sigma$ that does not contain $\pi$ is said to \emph{avoid} $\pi$.

The relation $\sympattern$ is a partial order on the set $\sym$ of all permutations.
Moreover, properties of the poset $(\sym,\sympattern)$ have been described in the literature~\cite{poset}
and we recall some of the most well-known here:
$(\sym,\sympattern)$ is a well-founded poset (\ie it does not contain infinite descending chains),
but it is not well-ordered, since it contains infinite \emph{antichains} (\ie infinite sets of pairwise incomparable elements);
moreover, it is a graded poset (the rank function being the size of the permutations).

\begin{definition}
A \emph{permutation class} (sometimes called \emph{pattern class} or \emph{class} for short) is a set of permutations \C that is downward closed for $\sympattern$:
for all $\sigma \in \C$, if $\pi \sympattern \sigma$, then $\pi \in \C$.
\label{def:permutation_class}
\end{definition}

Permutation classes are then just \emph{order ideals} in the poset $(\sym,\sympattern)$. 
In this article, we will also consider order ideals in other posets, and we will call them \emph{classes} as well. 
This choice in the terminology, which could be surprising to a reader more familiar with poset theory, 
is in accordance with the huge literature on permutation classes, where we took the motivations for our work. 

\smallskip

For any set $\B$ of permutations, denoting $\Avperm(\B)$ the set of all permutations that avoid every pattern in $\B$,
we clearly have that $\Avperm(\B)$ is a permutation class.
The converse statement is also true. Namely:

\begin{proposition}
For every permutation class \C, there is a unique antichain $\B$ such that $\C = \Avperm(\B)$.
The set $\B$ consists of all minimal permutations (in the sense of $\sympattern$) that do not belong to \C.
\label{prop:perm_basis}
\end{proposition}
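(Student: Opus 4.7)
The plan is to define $\B$ explicitly as the set of all minimal permutations (in the sense of $\sympattern$) that do not lie in $\C$, and then verify in turn that (i) $\B$ is an antichain, (ii) $\C = \Avperm(\B)$, and (iii) any other antichain $\B'$ with $\C = \Avperm(\B')$ must equal $\B$.

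For (i), if $\pi_1,\pi_2 \in \B$ with $\pi_1 \sympattern \pi_2$, then $\pi_1 \notin \C$ and $\pi_2$ is minimal among permutations not in $\C$, so $\pi_1 = \pi_2$. For (ii), the inclusion $\C \subseteq \Avperm(\B)$ is immediate from the fact that $\C$ is downward closed: any $\pi \in \B$ that is a pattern of some $\sigma \in \C$ would itself lie in $\C$, contradicting $\pi \in \B$. For the reverse inclusion, I take $\sigma \notin \C$ and need to exhibit some $\pi \in \B$ with $\pi \sympattern \sigma$. Here I would invoke the well-foundedness of $(\sym,\sympattern)$ recalled in the excerpt: the set of patterns of $\sigma$ not in $\C$ is nonempty (it contains $\sigma$ itself) and inherits well-foundedness, so it admits a $\sympattern$-minimal element $\tau$. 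A short transitivity argument then shows that $\tau$ is in fact minimal in all of $\sym \setminus \C$: any $\rho \sympattern \tau$ with $\rho \neq \tau$ and $\rho \notin \C$ would by transitivity be a pattern of $\sigma$ not in $\C$ strictly below $\tau$, contradicting the choice of $\tau$. Thus $\tau \in \B$ and $\tau \sympattern \sigma$, so $\sigma \notin \Avperm(\B)$.

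For (iii), suppose $\B'$ is another antichain with $\C = \Avperm(\B')$. Given $\pi \in \B$, we have $\pi \notin \Avperm(\B')$, so there exists $\pi' \in \B'$ with $\pi' \sympattern \pi$; but $\pi' \in \B'$ forces $\pi' \notin \C$, so minimality of $\pi$ yields $\pi' = \pi$ and hence $\pi \in \B'$. Conversely, given $\pi' \in \B'$, the argument of (ii) produces $\pi \in \B$ with $\pi \sympattern \pi'$; by the previous inclusion, $\pi \in \B'$, and since $\B'$ is an antichain, $\pi = \pi'$, so $\pi' \in \B$.

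The only slightly delicate point — and hence the main obstacle — is ensuring the existence of a minimal pattern outside $\C$ below any given $\sigma \notin \C$; this is handled by the explicit appeal to the well-foundedness of $(\sym,\sympattern)$. Every other step is essentially bookkeeping from the definitions of \emph{antichain}, \emph{minimality}, and \emph{downward-closure}.
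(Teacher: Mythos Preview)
Your proof is correct and follows essentially the same approach as the paper. The paper does not prove this proposition directly (it cites it as classical, referring to~\cite{bona}), but immediately afterwards states and proves the general version for arbitrary well-founded posets (Proposition~\ref{prop:basis_of_downward_closed_subposet}); that proof defines $\B$ as the set of minimal elements of the complement, uses well-foundedness exactly as you do to show $\C = \Av(\B)$, and observes that minimality forces $\B$ to be an antichain. The only minor difference is in the uniqueness step: the paper argues contrapositively (distinct antichains yield distinct avoidance classes), whereas you give an explicit double-inclusion argument, which is arguably more transparent.
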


In the usual terminology, $\B$ is called the \emph{basis} of \C.
Here, we shall rather call $\B$ the \emph{permutation-basis} (or \emph{$p$-basis} for short), to distinguish from other kinds of bases that we introduce in Section~\ref{sec:excluded_submatrices}.

Proposition~\ref{prop:perm_basis} is a classical fact in the permutation pattern field; for a proof, see for instance~\cite{bona}.
Notice that because $(\sym,\sympattern)$ contains infinite antichains, the $p$-basis of a permutation class may be infinite.

Actually, Proposition~\ref{prop:perm_basis} does not hold only for permutation classes,
but for downward closed sets in any \emph{well-founded poset}, \emph{i.e.} partially ordered set that do not contain infinite descending chains.

\begin{proposition}
Let $(\mathfrak{X},\preccurlyeq)$ be a well-founded poset.
For any subset $\C$ of $\mathfrak{X}$ that is downward closed for $\preccurlyeq$,
there exists a unique antichain $\B$ of $\mathfrak{X}$ such that
$\C = \Av_{\mathfrak{X}}(\B) = \{ x \in \mathfrak{X} : $ for all $b \in \B, b \preccurlyeq x$ does not hold$\}$.
The set $\B$ consists of all minimal elements of $\mathfrak{X}$ (in the sense of $\preccurlyeq$) that do not belong to \C.
\label{prop:basis_of_downward_closed_subposet}
\end{proposition}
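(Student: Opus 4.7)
The plan is to define $\B$ explicitly as the set of minimal elements of $\mathfrak{X}\setminus\C$ (in the sense of $\preccurlyeq$), and then verify the three required properties in turn: that $\B$ is an antichain, that $\C=\Av_{\mathfrak{X}}(\B)$, and that $\B$ is the unique such antichain. That $\B$ is an antichain is immediate from its definition, since any two comparable minimal elements must be equal.

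The heart of the argument is the equality $\C=\Av_{\mathfrak{X}}(\B)$. For the inclusion $\C\subseteq\Av_{\mathfrak{X}}(\B)$, I would argue by contradiction: if $x\in\C$ and $b\in\B$ satisfied $b\preccurlyeq x$, then downward-closure of $\C$ would place $b$ in $\C$, contradicting $b\in\B\subseteq\mathfrak{X}\setminus\C$. For the reverse inclusion, given $x\notin\C$, I must exhibit some $b\in\B$ with $b\preccurlyeq x$. This is where well-foundedness is used: the set $\{y\in\mathfrak{X}\setminus\C : y\preccurlyeq x\}$ is non-empty (it contains $x$), so admits a minimal element $b$. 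Any element strictly below $b$ that lay outside $\C$ would also be below $x$, contradicting the minimality of $b$ in this set; hence $b$ is in fact minimal in all of $\mathfrak{X}\setminus\C$, so $b\in\B$, as required.

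For uniqueness, suppose $\B'$ is another antichain with $\C=\Av_{\mathfrak{X}}(\B')$. Reflexivity of $\preccurlyeq$ shows that every $b'\in\B'$ lies outside $\C$. Taking $b\in\B$, the fact that $b\notin\Av_{\mathfrak{X}}(\B')$ yields some $b'\in\B'$ with $b'\preccurlyeq b$; since $b'\notin\C$ and $b$ is minimal in $\mathfrak{X}\setminus\C$, we get $b'=b$, so $\B\subseteq\B'$. The converse inclusion is symmetric, deducing minimality of any $b'\in\B'$ in $\mathfrak{X}\setminus\C$ from the antichain property of $\B'$ together with the defining equality $\C=\Av_{\mathfrak{X}}(\B')$.

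The proof presents no real obstacle beyond a careful use of well-foundedness in the precise form that every non-empty subset of $\mathfrak{X}$ possesses a minimal element (equivalent to the absence of infinite descending chains, modulo dependent choice). Once this is granted, the argument is a verbatim generalization of the standard proof of Proposition~\ref{prop:perm_basis}.
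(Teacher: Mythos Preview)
Your proof is correct and follows essentially the same approach as the paper: define $\B$ as the minimal elements of $\mathfrak{X}\setminus\C$, use well-foundedness to obtain $\C=\Av_{\mathfrak{X}}(\B)$, and then establish uniqueness. Your treatment is in fact more explicit than the paper's, which compresses the equality $\C=\Av_{\mathfrak{X}}(\B)$ into a single sentence and dispatches uniqueness by asserting (without detail) that distinct antichains yield distinct avoidance sets; your direct mutual-inclusion argument for $\B=\B'$ spells out what the paper leaves to the reader.
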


\begin{proof}
Let $\C$ be a subset of $\mathfrak{X}$ that is downward closed for $\preccurlyeq$.
The complement $\mathfrak{X} \setminus \C$ of $\C$ with respect to $\mathfrak{X}$ is upward closed for $\preccurlyeq$.
Let us define $\B$ to be the set of minimal elements of $\mathfrak{X} \setminus \C$:
$\B = \{b \in \mathfrak{X} \setminus \C  : \forall x \in \mathfrak{X} \setminus \C, \text{ if } x \preccurlyeq b \text{ then }x=b\}$.
This is equivalent to characterizing $\B$ as the set of minimal elements of $\mathfrak{X}$ (in the sense of $\preccurlyeq$) that do not belong to \C.
Because $\mathfrak{X}$ is well-founded, we have that $x \in \mathfrak{X} \setminus \C$ if and only if $\exists b \in \B$ such that $b \preccurlyeq x$. 
The contrapositive gives $\C = \Av_{\mathfrak{X}}(\B)$.
In addition, by minimality, the elements of $\B$ are pairwise incomparable, so that $\B$ is indeed an antichain.

To ensure uniqueness, it is enough to notice that for two different antichains $\B$ and $\B'$
the sets $\C = \Av_{\mathfrak{X}}(\B)$ and $\C' = \Av_{\mathfrak{X}}(\B')$ are also different.
\end{proof}

\subsection{Permutation matrices and the submatrix order}
\label{subsec:submatrix}

Permutations are in (obvious) bijection with permutation matrices,
\ie (necessarily square) binary matrices with exactly one entry $1$ in each row and in each column.
To any permutation $\sigma$ of $\sym_n$, we may associate a permutation matrix $M_{\sigma}$ of dimension $n$
by setting $M_{\sigma}(i,j) = 1$ if $i = \sigma(j)$, and $0$ otherwise.
Throughout this work we adopt the convention that rows of matrices are numbered from bottom to top,
so that the $1$ in $M_{\sigma}$ are at the same positions than the dots in the diagram of $\sigma$ -- see an example on Figure~\ref{fig:perm-matrix_diagram}.

\begin{figure}[htd]
\begin{center}
\includegraphics[width=8cm]{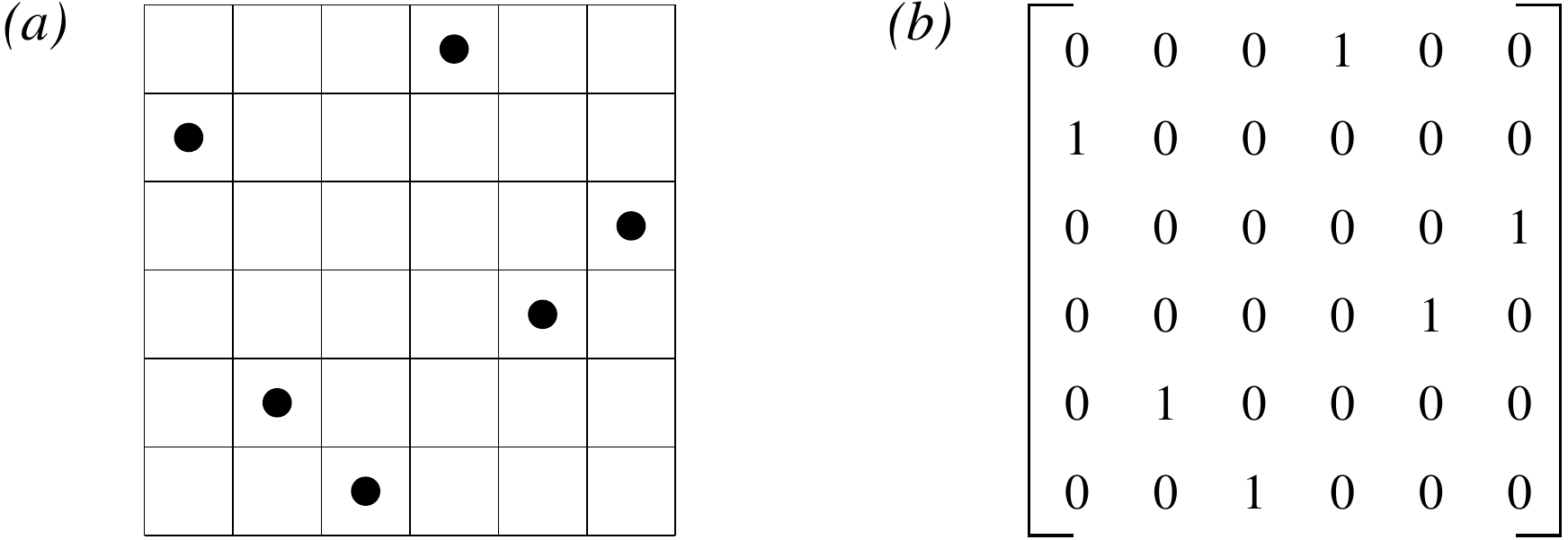}
\caption{$(a)$ Graphical representation (or diagram) of the permutation $\sigma=521634$. $(b)$ The permutation matrix corresponding to $\sigma$.}
\label{fig:perm-matrix_diagram}
\end{center}
\end{figure}

Let \matr be the class of binary matrices (\emph{i.e.} with entries in $\{0,1\}$).
We denote by $\preccurlyeq$ the usual submatrix order on \matr,
\emph{i.e.} $M' \preccurlyeq M$ if $M'$ may be obtained from $M$ by deleting any collection of rows and/or columns.
Of course, whenever $\pi \sympattern \sigma$, we have that $M_{\pi}$ is a submatrix of $M_{\sigma}$.

It follows from identifying permutations with the corresponding permutation matrices
that we may rephrase the definition of permutation classes as follows:
A set \C of permutations is a class if and only if, for every $\sigma \in \C$, every submatrix of $M_{\sigma}$ which is a permutation is in \C.
This does not say much by itself,
but it allows to define analogues of permutation classes for other combinatorial objects that are naturally represented by matrices, like polyominoes.

\subsection{Polyominoes and polyomino classes}\label{sec:papc}

\begin{definition}
A polyomino is a finite union of cells (\ie unit squares in the plane $\mathbb{Z} \times\mathbb{Z}$) that is connected and has no cut point
(\emph{i.e.} the set of cells has to be connected according to the edge adjacency).
Polyominoes are defined up to translation.
\label{def:polyomino}
\end{definition}

A polyomino $P$ may be represented by a binary matrix $M$ whose dimensions are those of the minimal bounding rectangle of $P$:
drawing $P$ in the positive quarter plane, in the unique way that $P$ has contacts with both axes,
an entry $(i,j)$ of $M$ is equal to $1$ if the unit square $[j-1,j]\times[i-1,i]$ of $\mathbb{Z} \times\mathbb{Z}$
is a cell of $P$, $0$ otherwise (see Figure~\ref{polMatr}).
Notice that, according to this definition, in a matrix representing a polyomino the first (resp. the last) row (resp. column) should contain at least a $1$.

\begin{figure}[htd]
\begin{center}
\includegraphics[width=8cm]{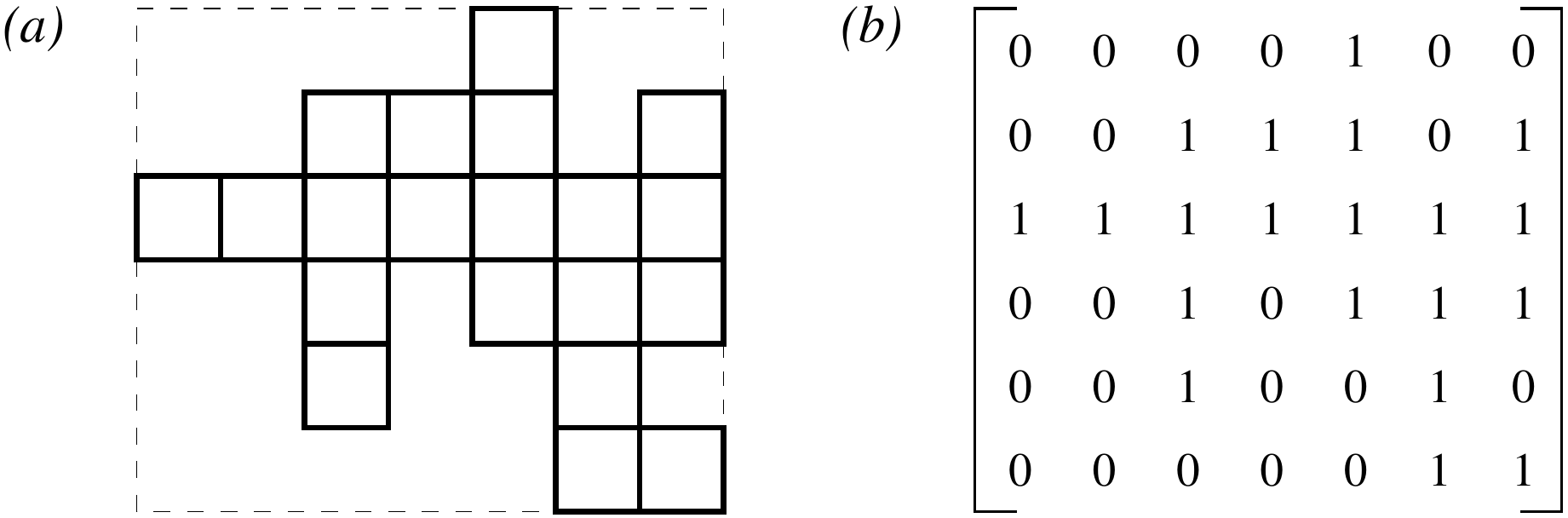}
\caption{A polyomino and its representation as a binary matrix.}
\label{polMatr}
\end{center}
\end{figure}

Let us denote by \poly the set of polyominoes, viewed as binary matrices as explained above.
We can consider the restriction of the submatrix order $\preccurlyeq$ on \poly.
This defines the poset $(\poly,\polypattern)$ and the pattern order between polyominoes:
a polyomino $P$ is a \emph{pattern} of a polyomino $Q$ (which we denote $P \polypattern Q$)
when the binary matrix representing $P$ is a submatrix of that representing $Q$.

We point out that the order $\polypattern$ has already been studied in~\cite{CR} under the name of {\em subpicture order}.
The main point of focus of~\cite{CR} is the family of {\em $L$-convex polyominoes} defined by the same authors in~\cite{CR1}.
But~\cite{CR} also proves that $\polypattern$ is not a partial well-order, since $(\poly,\polypattern)$ contains infinite antichains.
Remark also that $(\poly,\polypattern)$ is a graded poset (the rank function being the semi-perimeter of the bounding box of the polyominoes). 
This implies in particular that $(\poly,\polypattern)$ is well-founded.
Notice that these properties are shared with the poset $(\sym,\sympattern)$ of permutations. 

A natural analogue of permutation classes for polyominoes is as follows:

\begin{definition}
A \emph{polyomino class} is a set of polyominoes \C that is downward closed for $\polypattern$:
for all polyominoes $P$ and $Q$, if $P \in \C$ and $Q \polypattern P$, then $Q \in \C$.
\label{def:polyomino_class}
\end{definition}

The reader can exercise in finding simple examples of polyomino classes, like 
the family of polyominoes having at most $k$ columns, for any fixed $k$, or
the family of polyominoes having a rectangular shape.
Some of the most famous families of polyominoes are indeed polyomino classes,
like the {\em convex polyominoes} and the $L$-convex polyominoes.
This will be investigated in more details in Section~\ref{sec:known_classes_poly}.
However, there are also well-known families of polyominoes which are not polyomino classes, like:
the family of polyominoes having a square shape, 
or the family of polyominoes {\em with no holes} (\ie polyominoes whose boundary is a simple path).
Figure~\ref{fig:buco} shows that a polyomino in this class may contain a polyomino with a hole.

\begin{figure}[ht]
\begin{center}
\includegraphics[width=6cm]{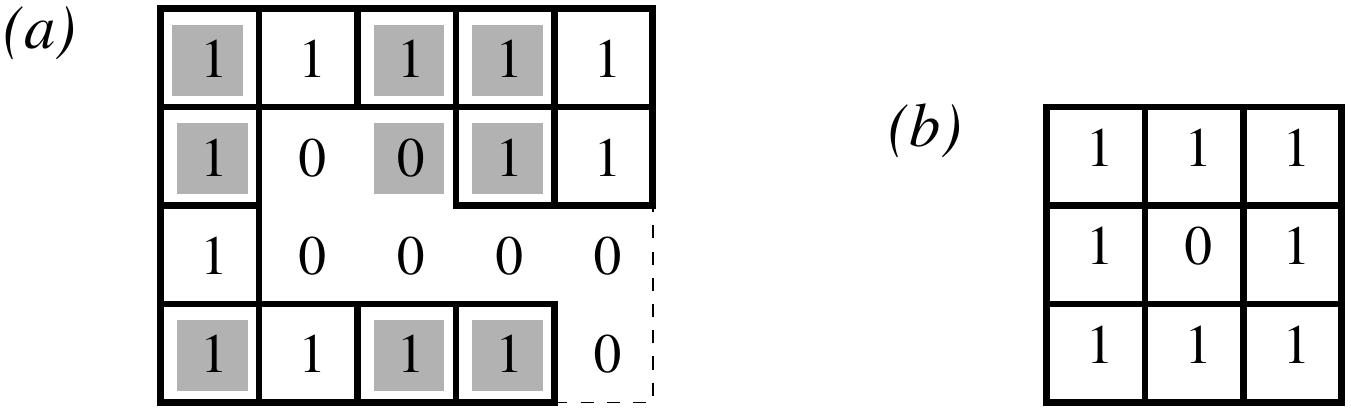}
\caption{$(a)$ A polyomino $P$ with no holes; $(b)$ A polyomino $P' \polypattern P$ containing a hole.}
\label{fig:buco}
\end{center}
\end{figure}

Similarly to the case of permutations, for any set $\B$ of polyominoes,
let us denote by $\Avp(\B)$ the set of all polyominoes that do not contain any element of $\B$ as a pattern.
Every such set $\Avp(\B)$ of polyominoes defined by pattern avoidance is a polyomino class.
Conversely, like for permutation classes, every polyomino class may be characterized in this way.

\begin{proposition}
For every polyomino class \C, there is a unique antichain \B of polyominoes such that  $\C=\Avp(\B)$.
The set \B consists of all minimal polyominoes (in the sense of \polypattern) that do not belong to \C.
\label{prop:polyomino_basis}
\end{proposition}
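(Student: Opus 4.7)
The plan is to deduce Proposition~\ref{prop:polyomino_basis} as a direct instance of Proposition~\ref{prop:basis_of_downward_closed_subposet}, which has already been proved in full generality for well-founded posets. So the work reduces to verifying the hypothesis of that general statement for the ambient poset $(\poly, \polypattern)$, and then identifying the objects.

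First, I would observe that $(\poly, \polypattern)$ is well-founded. The preceding discussion noted that this poset is graded, with rank function given by the semi-perimeter of the bounding box of a polyomino. Gradedness with nonnegative integer rank immediately implies well-foundedness, since any strictly descending chain of polyominoes would induce a strictly descending chain of nonnegative integers, which cannot exist.

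Second, Definition~\ref{def:polyomino_class} states precisely that a polyomino class $\C$ is a downward-closed subset of $(\poly, \polypattern)$. Applying Proposition~\ref{prop:basis_of_downward_closed_subposet} with $\mathfrak{X} = \poly$ and the order $\polypattern$ then yields a unique antichain $\B \subseteq \poly$ such that $\C = \Av_{\poly}(\B)$, and this $\B$ is exactly the set of minimal elements of $\poly$ (in the sense of $\polypattern$) that do not belong to $\C$. Since $\Avp(\B)$ is the notational shorthand for $\Av_{\poly}(\B)$ introduced above, the conclusion of Proposition~\ref{prop:polyomino_basis} follows.

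The only thing to double-check is that the notion of ``minimal polyominoes not in $\C$'' in the statement matches ``minimal elements of $\poly \setminus \C$ in the sense of $\polypattern$'' coming out of Proposition~\ref{prop:basis_of_downward_closed_subposet}; these coincide by our choice of ambient poset. It is worth emphasizing that choosing $\poly$ (and not the larger $\matr$) as ambient poset is what guarantees that the elements of $\B$ are themselves polyominoes. In short, there is no substantive obstacle: the real work has been done once and for all in the poset-theoretic statement, and Proposition~\ref{prop:polyomino_basis} is the polyomino analogue of Proposition~\ref{prop:perm_basis}, derivable in exactly the same way.
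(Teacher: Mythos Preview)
Your proof is correct and follows exactly the same approach as the paper: the paper's own proof simply says that the result follows immediately from Proposition~\ref{prop:basis_of_downward_closed_subposet} together with the fact that $(\poly,\polypattern)$ is well-founded. You have just made the identifications and the well-foundedness argument a bit more explicit than the paper does.
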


\begin{proof}
It follows immediately from Proposition~\ref{prop:basis_of_downward_closed_subposet} and the fact that $(\poly,\polypattern)$ is a well-founded poset.
\end{proof}

As in the case of permutations we call \B the \emph{polyomino-basis} (also abbreviated as \emph{$p$-basis}), to distinguish from other kinds of bases introduced in Section~\ref{sec:excluded_submatrices}.

\begin{example}\label{ex:W1}
The set ${\mathcal{V}}$ of vertical bars (\emph{i.e.}, polyominoes with only one column) is a polyomino class 
whose $p$-basis is $\left\{{\footnotesize\left[\begin{array}{cc}
          1 & 1
         \end{array}
 \right]}\right\}$.

Let ${\mathcal{W}}\xspace$ be the set of polyominoes made of at most two columns. 
It is a polyomino class and its $p$-basis is shown in Figure~\ref{fig:W1}.
\end{example}

\begin{figure}[ht]
\begin{center}
\includegraphics[width=6cm]{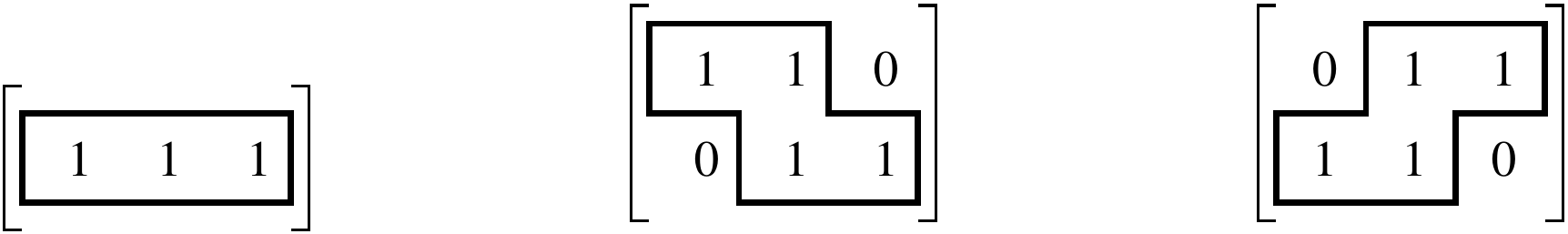}
\caption{The $p$-basis of the class ${\mathcal{W}}\xspace$.}
\label{fig:W1}
\end{center}
\end{figure}

Recall that $(\poly,\polypattern)$ contains infinite antichains~\cite{CR}, so there exist polyomino classes with infinite $p$-basis. 
We will show an example of a polyomino class with an infinite $p$-basis in Proposition~\ref{prop:infinite_basis}.
However, we are not aware of \emph{natural} polyomino classes whose $p$-basis is infinite.

\section{Characterizing classes with excluded submatrices}
\label{sec:excluded_submatrices}

\subsection{Submatrices as excluded patterns in permutations and polyominoes}
\label{subsec:matrix_patterns}

Obviously, not all submatrices of permutation matrices are themselves permutation matrices. 
More precisely:

\begin{remark}
The submatrices of permutation matrices
are exactly those that contain at most one $1$ in each row and each column.
We will call such matrices \emph{quasi-permutation matrices} in the rest of this paper. 
Denoting $\quasip$ the set of quasi-permutation matrices, $(\quasip, \matrpattern)$ 
is a subposet of $(\matr, \matrpattern)$, and is of course also well-founded. 
\label{rem:submatrices_of_perms}
\end{remark}

Quasi-permutation matrices are equivalently described as \emph{rook placements} (of rectangular shape). 
Rook placements avoiding rook placements have recently been studied from an enumerative point of view, 
as well as rook placements avoiding permutations~\cite{DaPu}. 
However, to our knowledge, permutations that avoid rook placements have never been investigated, 
and this is also one way of viewing our work on permutation classes defined by the avoidance of some quasi-permutation matrices 
(see in particular Section~\ref{sec:known_classes_perm}). 

\medskip

Similarly to Remark~\ref{rem:submatrices_of_perms}, not all submatrices of polyominoes are themselves polyominoes,
but the situation is very different from that of permutations:

\begin{remark}
Every binary matrix is a submatrix of some polyomino.
\label{rem:submatrices_of_pol}
\end{remark}

Indeed, for every binary matrix $M$,
it is always possible to add rows and columns of $1$ to $M$
in such a way that all $1$ entries of the resulting matrix are connected. 

\medskip

From Remarks~\ref{rem:submatrices_of_perms} and~\ref{rem:submatrices_of_pol},
it makes sense to examine sets of permutations (resp. polyominoes)
that avoid submatrices that are not themselves permutations (resp. polyominoes) 
but quasi-permutation matrices\footnote{\label{note1}We may as well consider sets $\M$ containing arbitrary binary matrices.
But excluding a matrix $M$ which is not a quasi-permutation matrix
is not actually introducing any restriction, since no permutation contains $M$ as a submatrix.} (resp. binary matrices).

\begin{definition}
For any set $\M$ of quasi-permutation matrices (resp. of binary matrices),
let us denote by $\Avperm(\M)$ (resp. $\Avp(\M)$) the set of all permutations (resp. polyominoes)
that do not contain any submatrix in $\M$.
\label{def:submatrix_avoidance}
\end{definition}

Figure~\ref{motivo} illustrates Definition~\ref{def:submatrix_avoidance} in the polyomino case.

\begin{figure}[ht]
\begin{center}
\includegraphics[width=11cm]{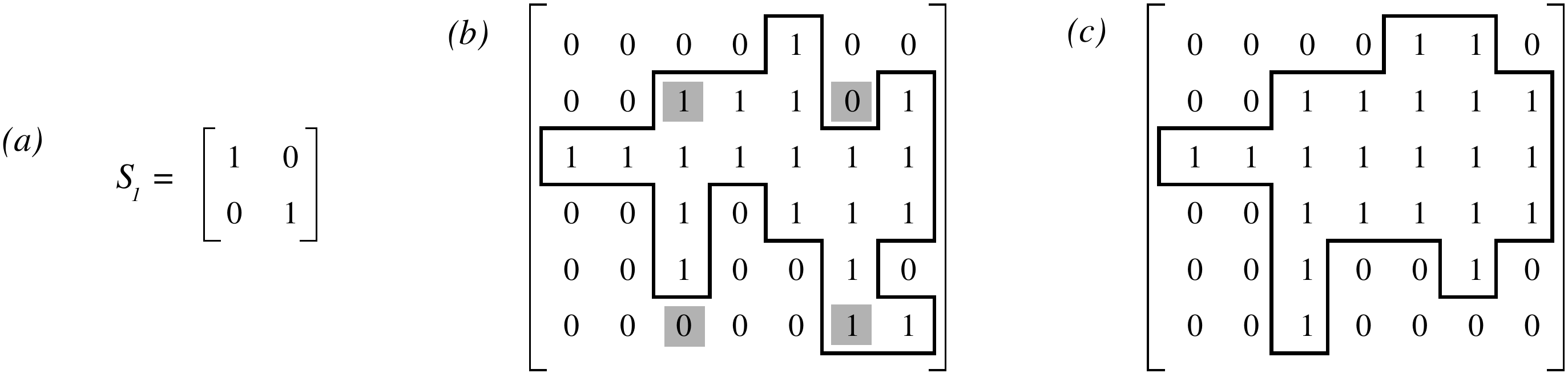}
\caption{$(a)$ a matrix $S_1$; $(b)$ a polyomino that contains $S_1$ as a submatrix, hence does not belong to $\Avp(S_1)$;
$(c)$ a polyomino that does not contain $S_1$, \emph{i.e.} that belongs to $\Avp(S_1)$.}
\label{motivo}
\end{center}
\end{figure}

The following facts, although immediate to prove, will be useful in our work:

\begin{remark}
When $\M$ contains only permutations (resp. polyominoes),
these definitions of $\Avperm(\M)$ and $\Avp(\M)$ coincide with the ones given in Section~\ref{sec:def_classes}.
\end{remark}

\begin{remark}
Denoting $\Avm(\M)$ the set of binary matrices without any submatrix in $\M$, we have
$
\Avperm(\M)=\Avm(\M)\cap\sym \text{ and } \Avp(\M)=\Avm(\M)\cap\poly \text{.}
$

In addition\footnote{Like in footnote \ref{note1}, 
we may assume that $\M$ contains only quasi-permutation matrices here.}, 
denoting $\Avquasip(\M)$ the set of quasi-permutation matrices without any submatrix in $\M$, 
we also have $
\Avperm(\M)=\Avquasip(\M)\cap\sym\text{.}
$
\label{rem:intersection_with_permutations_or_polyominoes}
\end{remark}

\begin{remark}
Sets of the form $\Avperm(\M)$  are downward closed for $\sympattern$, \ie are permutation classes.
Similarly, sets $\Avp(\M)$ are polyomino classes.
\label{rem:submatrix_avoidance_implies_class}
\end{remark}

We believe it is quite natural to characterize some permutation or polyomino classes
by avoidance of submatrices, and will provide several examples in Sections~\ref{sec:known_classes_perm} and~\ref{sec:known_classes_poly}.
In the present section, we investigate further
the description of permutation and polyomino classes by avoidance of matrices,
and in particular how canonical and concise such a description can be.

\subsection{Matrix bases of permutation and polyomino classes}

From Propositions~\ref{prop:perm_basis} and~\ref{prop:polyomino_basis}, 
every permutation (resp. polyomino) class \C is characterized 
by the avoidance of a set of permutations (resp. polyominoes) which is uniquely determined.
With Proposition~\ref{prop:basis_of_downward_closed_subposet},
we may similarly associate with every permutation (resp. polyomino) class $\C$
a canonical set $\M$ of matrices such that $\C = \Avperm(\M)$ (resp. $\C = \Avp(\M)$)
(see Definition~\ref{def:canonical_m-basis}).
However, we shall see that for some permutation (resp. polyomino) classes $\C$,
there exist several antichains $\M'$ such that $\C = \Avperm(\M')$ (resp. $\C = \Avp(\M')$).

\begin{definition}
Let \C be a permutation (resp. polyomino) class.
Denote by $\C^+$ be the set of matrices that appear as a submatrix of some element of \C, \emph{i.e.}
\[
\C^+ = \{ M \in \matr : \exists P \in \C, \text{ such that }M \preccurlyeq P\} \text{.}
\]
Notice that $\C^+$ contains only quasi-permutation (resp. binary) matrices. 

Denote by \M the set of all minimal quasi-permutation (resp. binary) matrices in the sense of $\preccurlyeq$ that do not belong to $\C^+$.
\M is called the \emph{canonical matrix-basis} (or \emph{canonical $m$-basis} for short) of \C.
\label{def:canonical_m-basis}
\end{definition}

Of course, the canonical $m$-basis of a class $\C$ is uniquely defined, and is always an antichain for $\preccurlyeq$. 
Moreover, Proposition~\ref{prop:canonical_m-basis} shows that it indeed provides a description of \C by avoidance of submatrices.

\begin{proposition}
Let \C be a permutation (resp. polyomino) class,
and denote by $\M$ its canonical $m$-basis.
We have $\C = \Avperm(\M)$ (resp. $\C = \Avp(\M)$).
\label{prop:canonical_m-basis}
\end{proposition}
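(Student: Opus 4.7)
The plan is to apply Proposition~\ref{prop:basis_of_downward_closed_subposet} not directly to $\C$ but to the auxiliary set $\C^+$ of matrix submatrices of elements of $\C$, and then transfer the resulting characterization back from $\C^+$ to $\C$. I will describe the argument for the permutation case; the polyomino case will be entirely analogous after swapping $(\quasip, \preccurlyeq)$ for $(\matr, \preccurlyeq)$ and using Remark~\ref{rem:submatrices_of_pol} in place of Remark~\ref{rem:submatrices_of_perms}.

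First I would verify that $\C^+$ is downward closed in the ambient poset: if $N \in \C^+$, there exists $P \in \C$ with $N \preccurlyeq M_P$, and then any $M \preccurlyeq N$ also satisfies $M \preccurlyeq M_P$, whence $M \in \C^+$. In the permutation case Remark~\ref{rem:submatrices_of_perms} ensures moreover that $\C^+ \subseteq \quasip$, so $\C^+$ is a downward closed subset of the well-founded poset $(\quasip,\preccurlyeq)$ (well-foundedness follows, for instance, from the fact that the number of $1$-entries is a rank function). Proposition~\ref{prop:basis_of_downward_closed_subposet} therefore applies and yields a unique antichain of minimal elements of $\quasip \setminus \C^+$ characterizing $\C^+$ by avoidance. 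By Definition~\ref{def:canonical_m-basis} this antichain is exactly $\M$, so $\C^+ = \Avquasip(\M)$.

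It then remains to pass from $\C^+$ to $\C$. For the inclusion $\C \subseteq \Avperm(\M)$, any $\sigma \in \C$ satisfies $M_\sigma \preccurlyeq M_\sigma$ with $\sigma \in \C$, hence $M_\sigma \in \C^+$, and consequently $M_\sigma$ avoids every element of $\M$, giving $\sigma \in \Avperm(\M)$. For the reverse inclusion, take $\sigma \in \Avperm(\M)$; then $M_\sigma$ is a quasi-permutation matrix avoiding every element of $\M$, so $M_\sigma \in \Avquasip(\M) = \C^+$. By the definition of $\C^+$ there is some $\tau \in \C$ with $M_\sigma \preccurlyeq M_\tau$, which by the identification between the submatrix order and $\sympattern$ recalled in Section~\ref{subsec:submatrix} means $\sigma \sympattern \tau$; downward closure of the class $\C$ then gives $\sigma \in \C$.

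The main (small) obstacle is simply the conceptual bookkeeping: one must be careful to run Proposition~\ref{prop:basis_of_downward_closed_subposet} inside the correct ambient poset ($\quasip$ for permutations, $\matr$ for polyominoes, matching Definition~\ref{def:canonical_m-basis}), and then to use the fact that pattern containment between two permutations (resp.\ polyominoes) is literally the submatrix relation on their matrix encodings in order to translate the statement about $\C^+$ back into the desired statement about $\C$. Once this is set up, everything reduces to a direct application of the general result already proved in Proposition~\ref{prop:basis_of_downward_closed_subposet}.
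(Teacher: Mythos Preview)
Your proof is correct and follows the same route as the paper: apply Proposition~\ref{prop:basis_of_downward_closed_subposet} to $\C^+$ in the ambient poset $(\quasip,\preccurlyeq)$ (resp.\ $(\matr,\preccurlyeq)$) to obtain $\C^+=\Avquasip(\M)$ (resp.\ $\Avm(\M)$), then pass back to $\C$; you simply make explicit two steps the paper states tersely, namely that $\C^+$ is downward closed and that $\C=\C^+\cap\sym$ (resp.\ $\C^+\cap\poly$). One small inaccuracy worth fixing: the number of $1$-entries is not a rank function on $(\quasip,\preccurlyeq)$, since for instance $[1]\prec[\,1\ 0\,]$ have the same count; use the sum of the dimensions instead, or just observe that the total number of entries strictly decreases along any descending chain.
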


\begin{proof}
In the case of a permutation class $\C$, we can work in the poset $(\quasip, \preccurlyeq)$ 
of quasi-permutation matrices. Proposition~\ref{prop:basis_of_downward_closed_subposet}
ensures that $\C^+ = \Avquasip(\M)$. 
For a polyomino class, working in the poset of binary matrices $(\matr, \preccurlyeq)$, Proposition~\ref{prop:basis_of_downward_closed_subposet}
ensures that $\C^+ = \Avm(\M)$.
And since $\C = \C^+ \cap \sym$ (resp. $\C = \C^+ \cap \poly$),
Remark~\ref{rem:intersection_with_permutations_or_polyominoes} yields the conclusion. 
\end{proof}

\begin{example}[Canonical $m$-basis]
\label{ex:canonical_m-basis}
For the (trivial) permutation class $\T=\{1,12,21\}$, we have
$$\T^{+} = \Big\{ 
{\footnotesize 
\left[ \begin{array}{c}
0 \end{array} \right],
\left[ \begin{array}{c}
1 \end{array} \right],
\left[ \begin{array}{cc}
1 & 0  \end{array} \right],
\left[ \begin{array}{cc}
0 & 1 \end{array} \right],
\left[ \begin{array}{c}
1 \\
0 \end{array} \right],
\left[ \begin{array}{c}
0 \\
1 \end{array} \right],
\left[ \begin{array}{cc}
1 & 0 \\
0 & 1 \end{array} \right],
\left[ \begin{array}{cc}
0 & 1 \\
1 & 0 \end{array} \right]
}
\Big\}$$ 
and the canonical $m$-basis of $\T$ is
$\left\{
{\footnotesize 
  \left[\begin{array}{cc}
          0 & 0
         \end{array}
  \right],
  \left[\begin{array}{c}
          0\\
          0
         \end{array}
  \right]
}  
\right\}
$.

Let $\A$ be the permutation class $\Avperm(321,231,312)$. The canonical $m$-basis of $\A$ is
$
\{Q_1,Q_2\} \text{ with } Q_1 = {\footnotesize \left[\begin{array}{cc}
          1 & 0\\
          0 & 0\\
          0 & 1
         \end{array}
 \right]} \text{ and } Q_2 = {\footnotesize \left[\begin{array}{ccc}
          1 & 0 & 0\\
          0 & 0 & 1
         \end{array}
 \right]}\text{.}
$

The canonical $m$-basis of the class $\V$ of vertical bars, considered in Example~\ref{ex:W1}, is
$
\{{\footnotesize \left[\begin{array}{c}
          0
         \end{array}
 \right]}, {\footnotesize \left[\begin{array}{cc}
          1 & 1
         \end{array}
 \right]}\}
$.

Let $\R$ be the polyomino class consisting of polyominoes of rectangular shape. 
The canonical $m$-basis of $\R$ consists only of the matrix ${\footnotesize \left[\begin{array}{c}
         0
         \end{array}
  \right]}$.

The canonical $m$-basis of the class ${\mathcal{W}}\xspace$ of polyominoes with at most two columns, considered in Example \ref{ex:W1}, is \\
$\big\{ {\footnotesize 
  \left[\begin{array}{ccc}
          1 & 1 & 1 
         \end{array}
  \right],
  \left[\begin{array}{cc}
          0 & 0        
         \end{array}
  \right],} $ $ {\footnotesize 
  \left[\begin{array}{ccc}
          1 & 0 & 1        
         \end{array}
  \right],
  \left[\begin{array}{ccc}
          1 & 1 & 0       
         \end{array}
  \right],
  \left[\begin{array}{ccc}
          0 & 1 & 1       
         \end{array}
  \right] }
\big\}
\text{.}
$
\end{example}

There is one important difference between $p$-basis and canonical $m$-basis.
Every antichain of permutations (resp. polyominoes) is the $p$-basis of a class.
On the contrary, every antichain $\M$ of quasi-permutation (resp. binary) matrices describes a permutation (resp. polyomino) class $\Avperm(\M)$ (resp. $\Avp(\M)$),
but not every such antichain is the canonical $m$-basis of the corresponding permutation (resp. polyomino) class --
see Example~\ref{ex:m-basis} below.
Imposing the avoidance of matrices taken in an antichain being however a natural way of describing permutation and polyomino classes,
let us define the following weaker notion of basis.

\begin{definition}
Let \C be a permutation (resp. polyomino) class.
Every antichain $\M$ of matrices such that $\C=\Avperm(\M)$ (resp. $\Avp(\M)$)
is called a \emph{matrix-basis} (or \emph{$m$-basis}) of $\C$.
\label{def:m-basis}
\end{definition}

Example~\ref{ex:m-basis} 
show several examples of $m$-bases of permutation and polyomino classes
which are different from the canonical $m$-basis (see Example~\ref{ex:canonical_m-basis}).

\begin{example}[$m$-bases]
\label{ex:m-basis}
Consider the set $\M$ consisting of the following four matrices:
$M_1 = {\footnotesize \left[ \begin{array}{cc}
1 & 0 \\
0 & 0 \end{array} \right]}, \, M_2 = {\footnotesize \left[ \begin{array}{cc}
0 & 1 \\
0 & 0 \end{array} \right]}, \,  M_3 = {\footnotesize \left[ \begin{array}{cc}
0 & 0 \\
1 & 0 \end{array} \right]}, \, M_4 = {\footnotesize \left[ \begin{array}{cc}
0 & 0 \\
0 & 1 \end{array} \right]}$.
We may check that every permutation of size $3$ contains a submatrix $M \in \M$,
and that it actually contains each of these four $M_i$.
Moreover, $\M$ is an antichain, and so is obviously each set $\{M_i\}$.
Therefore, $\T = \Avperm(\M) =\Avperm(M_i)$, for each $1\leq i \leq 4$,
even though these antichains characterizing $\T$ are not the canonical $m$-basis of $\T$.

It is readily checked that 
$\A= \Avperm(Q_1)=\Avperm(Q_2)$ even though the canonical $m$-basis of
$\A$ is $\{Q_1,Q_2\}$.

The canonical $m$-basis of the class \V of vertical bars is $
\{{\footnotesize\left[\begin{array}{c}
          0
         \end{array}
 \right],\left[\begin{array}{cc}
          1 & 1
         \end{array}
 \right]}\}
$. But we also have $\Avp\left({\footnotesize\left[\begin{array}{cc}
          1 & 1
         \end{array}
 \right]}\right) = \V$.

Consider the sets
$\M_1=\Big\{ {\footnotesize \left[
  \begin{array}{cc}
  1 & 0\end{array}
\right],
\left[
  \begin{array}{cc}
  0 & 1\end{array}
\right],
\left[
  \begin{array}{cc}
  0 & 0\end{array}
\right],
\left[
  \begin{array}{c}
  0\\
  0\end{array}
\right],}$ ${\footnotesize
\left[
  \begin{array}{c}
  1\\
  0\end{array}
\right],
\left[
  \begin{array}{c}
  0\\
  1\end{array}
\right]}\Big\}$
and
$\M_2=\left\{{\footnotesize\left[
  \begin{array}{cc}
  1 & 0\end{array}
\right],
\left[
  \begin{array}{cc}
  0 & 1\end{array}
\right],
\left[
  \begin{array}{c}
  1\\
  0\end{array}
\right],
\left[
  \begin{array}{c}
  0\\
  1\end{array}
\right]}\right\}$. 
We may easily check that $\M_1$ and $\M_2$ are antichains,
and that their avoidance characterizes the rectangular polyominoes:
$\R=\Avp(\M_1)=\Avp(\M_2)$.

Similarly, the sets 
${\cal M}_3=\left\{{\footnotesize 
  \left[ \begin{array}{ccc}
          1 & 1 & 1 
         \end{array}
  \right],
  \left[\begin{array}{ccc}
          1 & 1 & 0       
         \end{array}
  \right]}
\right\}$ and ${\cal M}_4=\{{\footnotesize
  \left[\begin{array}{ccc}
          1 & 1 & 1 
         \end{array}
  \right],}$ ${\footnotesize
  \left[\begin{array}{ccc}
          0 & 1 & 1       
         \end{array}
  \right]}
\}
$ 
are $m$-bases of the class ${\mathcal{W}}\xspace$ of polyominoes with at most two columns. 
\end{example}

Example~\ref{ex:m-basis} shows in addition that the canonical $m$-basis is not always the most concise way
of describing a permutation (or polyomino) class 
by avoidance of submatrices. This motivates the following definition:

\begin{definition}
Let \C be a permutation (resp. polyomino) class.
A \emph{minimal $m$-basis} of $\C$ is an $m$-basis of $\C$ satisfying the following additional conditions:
\begin{itemize}
 \item[$(1.)$] \M is a minimal subset subject to $\C=\Avperm(\M)$ (resp. $\Avp(\M)$), \\
 \emph{i.e.} for every strict subset $\M'$ of $\M$, $\C \neq \Avperm(\M')$ (resp. $\Avp(\M')$);
 \item[$(2.)$] for every submatrix $M'$ of some matrix $M \in \M$, we have 
\begin{itemize}
  \item[$i.$] $M'=M$ or
  \item[$ii.$] with $\M'=\big( \M\setminus\{M\} \big) \cup\{M'\}$, $\C \neq \Avperm(\M')$ (resp. $\Avp(\M')$).
 \end{itemize}
\end{itemize}
\label{def:minimal_m-basis}
\end{definition}

Condition~$(1.)$ ensures minimality in the sense of inclusion,
while Condition~$(2.)$ ensures that it is not possible to replace a matrix of the minimal $m$-basis
by another one of smaller dimensions.
For future reference, let us notice that with the notations of Definition~\ref{def:minimal_m-basis},
the statement $\C \neq \Avperm(\M')$ (resp. $\Avp(\M')$) in Condition~$(2.)ii.$ is equivalent to
$\C \nsubseteq \Avperm(\M')$ (resp. $\Avp(\M')$), since the other inclusion always holds.

To illustrate the relevance of Condition~$(2.)$, consider for instance
the $m$-basis $\{M_1\}$ of $\T$, with $M_1 = {\footnotesize \left[ \begin{array}{cc}
1 & 0 \\
0 & 0 \end{array} \right]}$ (see Example~\ref{ex:m-basis}).
Of course it is minimal in the sense of inclusion, however noticing that
$\T = \Avperm\left({\footnotesize\left[ \begin{array}{cc} 0 & 0 \end{array} \right]}\right) = \Avperm\left({\footnotesize\left[ \begin{array}{c}
 0 \\
 0 \end{array} \right]}\right)$, with these excluded submatrices being submatrices of $M_1$,
it makes sense \emph{not} to consider $\{M_1\}$ as a \emph{minimal} $m$-basis.
This is exactly the point of Condition~$(2.)$.
Actually, $\left\{{\footnotesize\left[ \begin{array}{cc} 0 & 0 \end{array} \right]}\right\}$ and
$\left\{{\footnotesize\left[ \begin{array}{c}
 0 \\
 0 \end{array} \right]} \right\}$ both satisfy Conditions~$(1.)$ and~$(2.)$, \emph{i.e.}
are minimal $m$-basis of $\T$.

This also illustrates the somewhat undesirable property that a class may have several minimal $m$-bases.
This is not only true for the trivial class $\T$. 

\begin{example}\label{ex:minimal_m-basis}
For the permutation class $\A$, 
the $m$-bases $\{Q_1\}$ and $\{Q_2\}$ of $\A$ (see Example~\ref{ex:canonical_m-basis})
are minimal $m$-bases of $\A$.

For the polyomino class ${\mathcal{W}}\xspace$, 
the two $m$-bases ${\cal M}_3$ and ${\cal M}_4$ of Example~\ref{ex:m-basis} 
are minimal $m$-bases of ${\mathcal{W}}\xspace$. 
We can observe that in this case each polyomino containing a pattern of ${\cal M}_3$ must contain a pattern of ${\cal M}_4$ and conversely. 
\end{example}

However, the minimal $m$-bases of a class are relatively constrained:

\begin{proposition}
Let \C be a permutation (resp. polyomino) class and let $\M$ be its canonical $m$-basis.
The minimal $m$-bases of $\C$ are the subsets $\B$ of $\M$ that are minimal (for inclusion) under the condition $\C=\Avperm(\B)$ (resp. $\Avp(\B)$).
\label{prop:minimal_m-basis}
\end{proposition}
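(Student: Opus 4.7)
The plan is to prove a double inclusion: every minimal $m$-basis of $\C$ is a subset of its canonical $m$-basis $\M$, and conversely any inclusion-minimal $\B \subseteq \M$ with $\C = \Av_{\sym}(\B)$ (resp.\ $\Av_{\poly}(\B)$) satisfies both conditions of Definition~\ref{def:minimal_m-basis}. I will treat only the permutation case; the polyomino case is entirely analogous, working in $(\matr, \preccurlyeq)$ instead of $(\quasip, \preccurlyeq)$.

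For the first direction, let $\B$ be a minimal $m$-basis of $\C$ and pick $M \in \B$. Since $\C = \Av_{\sym}(\B)$, no element of $\C$ contains $M$ as a submatrix, hence $M \notin \C^+$. Because $(\quasip, \preccurlyeq)$ is well-founded, there is some minimal element $M'$ of $\quasip \setminus \C^+$ with $M' \preccurlyeq M$, and by Definition~\ref{def:canonical_m-basis} this $M'$ lies in $\M$. The key step is to argue $M = M'$: if $M' \neq M$, set $\B' = (\B \setminus \{M\}) \cup \{M'\}$; since any matrix containing $M$ also contains $M'$, $\Av_{\sym}(\B') \subseteq \Av_{\sym}(\B) = \C$, and since $M' \notin \C^+$ no element of $\C$ contains $M'$, giving the reverse inclusion $\C \subseteq \Av_{\sym}(\B')$. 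Thus $\C = \Av_{\sym}(\B')$, violating Condition~$(2.)$ of Definition~\ref{def:minimal_m-basis}. So $M = M' \in \M$, and therefore $\B \subseteq \M$. Condition~$(1.)$ of Definition~\ref{def:minimal_m-basis} then precisely states that $\B$ is minimal for inclusion subject to $\C = \Av_{\sym}(\B)$.

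For the converse, let $\B \subseteq \M$ be minimal for inclusion under $\C = \Av_{\sym}(\B)$. Being a subset of the antichain $\M$, the set $\B$ is itself an antichain, so it is an $m$-basis of $\C$. Condition~$(1.)$ holds by assumption. To verify Condition~$(2.)$, fix $M \in \B$ and a strict submatrix $M' \precneq M$, and set $\B' = (\B \setminus \{M\}) \cup \{M'\}$. Since $M \in \M$ is a \emph{minimal} element of $\quasip \setminus \C^+$, the strict submatrix $M'$ must belong to $\C^+$, so there exists $P \in \C$ with $M' \preccurlyeq P$. Then $P \notin \Av_{\sym}(\B')$, while $P \in \C$, which shows $\C \nsubseteq \Av_{\sym}(\B')$ and hence $\C \neq \Av_{\sym}(\B')$, as required.

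The main subtlety is the first direction, specifically the replacement argument: one needs the observation that shrinking an excluded pattern down to a minimal forbidden submatrix in $\M$ preserves the class, because the larger matrix was forbidden only insofar as it extends some element of $\M$. Once this replacement is in hand, Condition~$(2.)$ forces each element of a minimal $m$-basis to already lie in $\M$, and the converse direction is then a short consequence of the defining minimality of matrices in $\M$ relative to $\C^+$.
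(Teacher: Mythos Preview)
Your proof is correct and follows essentially the same approach as the paper: both directions hinge on the same key facts, namely that every element of an $m$-basis lies outside $\C^+$, and that elements of $\M$ are minimal in the complement of $\C^+$. The only cosmetic difference is in your first direction, where you directly establish both inclusions $\C = \Av_{\sym}(\B')$ to contradict Condition~$(2.)$, whereas the paper invokes Condition~$(2.)$ to get $\Av(\B') \subsetneq \C$ and then derives a contradiction from the existence of some $P \in \C \setminus \Av(\B')$; these are the same argument with the order of the reductio reversed.
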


\begin{proof}
For simplicity of notations, let us forget the indices and write $\Av(\B)$ instead of $\Avperm(\B)$ (resp. $\Avp(\B)$).

Consider a subset $\B$ of $\M$ that is minimal for inclusion under the condition $\C=\Av(\B)$,
and let us prove that $\B$ is a minimal $m$-basis of $\C$.
$\B$ is clearly an $m$-basis of $\C$ satisfying Condition~$(1.)$.
Assume that $\B$ does not satisfy Condition~$(2.)$: there is some $M \in \B$ and some proper submatrix $M'$ of $M$,
such that $\C=\Av(\B')$ for $\B'=(\B\setminus\{M\})\cup\{M'\}$.
By definition of the canonical $m$-basis, $M' \in \C^+$ (or $M$ would not be minimal for $\preccurlyeq$),
so there exists a permutation (resp. polyomino) $P \in \C$ such that $M' \preccurlyeq P$.
But then $P \notin \Av(\B') = \C$ bringing the contradiction which ensures that $\B$ satisfies Condition~$(2.)$.

Conversely, consider a minimal $m$-basis $\B$ of $\C$ and a matrix $M \in \B$,
and let us prove that $M$ belongs to $\M$.
Because of Condition~$(1.)$, this is enough to conclude the proof.
First, notice that $M \notin \C^+$.
Indeed, otherwise there would exist a permutation (resp. polyomino) $P \in \C$ such that $M \preccurlyeq P$,
and we would also have $P \notin \Av(\B) = \C$, a contradiction.
By definition, $\C^+ = \Avquasip(\M)$ (resp. $\C^+ = \Avm(\M)$), so there exists $M' \in \M$ such that $M' \preccurlyeq M$.
Since $\B$ is a minimal $m$-basis we either have $M=M'$, which proves that $M \in \M$,
or we have $\Av(\B') \varsubsetneq \C$ for $\B'=(\B\setminus\{M\})\cup\{M'\}$,
in which case we derive a contradiction as follows.
If $\Av(\B') \varsubsetneq \C$, then there is some permutation (resp. polyomino) $P \in \C$
which has a submatrix in $\B'$. It cannot be some submatrix in $\B\setminus\{M\}$, because $\C = \Av(\B)$.
So $M' \preccurlyeq P$, which is a contradiction to $P \in \C = \Av(\M)$.
\end{proof}

\begin{example}\label{ex:all_minimal_m-basis}
Proposition~\ref{prop:minimal_m-basis} 
(together with Examples~\ref{ex:canonical_m-basis} and~\ref{ex:m-basis})
allows to compute all minimal $m$-bases for our running examples. 

Both permutation classes $\T$ and $\A$ each have exactly two minimal $m$-bases,
namely $\left\{{\footnotesize \left[ \begin{array}{cc} 0 & 0 \end{array} \right]}\right\}$ and
$\left\{{\footnotesize\left[ \begin{array}{c}
 0 \\
 0 \end{array} \right]} \right\}$,
and $\{Q_1\}$ and $\{Q_2\}$ respectively.

The polyomino classes $\V$ and $\R$ have unique minimal $m$-bases, respectively 
$\left\{{\footnotesize\left[\begin{array}{cc}
          1 & 1
         \end{array}
 \right]}\right\}$ and $\left\{{\footnotesize\left[\begin{array}{c}
         0
         \end{array}
  \right]}\right\}$. 
But the polyomino class ${\mathcal{W}}\xspace$ has exactly two minimal $m$-bases,
namely ${\cal M}_3$ and ${\cal M}_4$.
\end{example}

A natural problem is then to characterize the permutation (resp. polyomino) classes
which have a unique minimal $m$-basis. 
Propositions~\ref{prop:when_p-basis_is_minimal} and~\ref{prop:uniqueness_minimal_m-basis} give partial answers to this problem. 

\begin{proposition}   
\label{prop:when_p-basis_is_minimal}
If the $p$-basis of a permutation (resp. polyomino) class \C is also a minimal $m$-basis of \C, 
then it is the unique minimal $m$-basis of \C.
\end{proposition}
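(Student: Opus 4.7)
The plan is to exploit Proposition~\ref{prop:minimal_m-basis}, which states that every minimal $m$-basis of $\C$ is a subset of its canonical $m$-basis $\M$, together with the fact that $\M$ is an antichain for $\preccurlyeq$. Write $\B$ for the $p$-basis of $\C$, assumed to be a minimal $m$-basis; then $\B \subseteq \M$. Given an arbitrary minimal $m$-basis $\B'$ of $\C$, one also has $\B' \subseteq \M$, and the goal is to conclude $\B = \B'$.

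The main step is to show the inclusion $\B \subseteq \B'$. Fix $b \in \B$. By definition of the $p$-basis, $b$ is a permutation (resp. polyomino) that does not belong to $\C$; since $\C = \Avperm(\B')$ (resp. $\Avp(\B')$), some $m' \in \B'$ must satisfy $m' \preccurlyeq b$. But both $m'$ and $b$ lie in $\M$, and $\M$ is an antichain for $\preccurlyeq$, forcing $m' = b$. Hence $b \in \B'$.

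Once $\B \subseteq \B'$ is established, the converse inclusion is immediate from Condition~$(1.)$ of Definition~\ref{def:minimal_m-basis}: a strict inclusion $\B \subsetneq \B'$ would exhibit $\B$ as a proper subset of $\B'$ still satisfying $\Avperm(\B) = \C$ (resp. $\Avp(\B) = \C$), contradicting the minimality of $\B'$ for inclusion. There is no serious obstacle in the argument; the only point to keep in mind is that, for a permutation or polyomino, containment as a submatrix and containment as a pattern coincide when the smaller object is itself a permutation or polyomino (as recorded in Section~\ref{subsec:submatrix}), which is what legitimately allows the $p$-basis $\B$ to be viewed as an $m$-basis in the first place.
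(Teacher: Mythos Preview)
Your proof is correct and follows essentially the same approach as the paper's: both use Proposition~\ref{prop:minimal_m-basis} to place any minimal $m$-basis inside the canonical $m$-basis $\M$, then exploit that $\M$ is an antichain to show $\B$ is contained in any such basis, and finally invoke inclusion-minimality to conclude equality. The paper phrases the middle step as a trichotomy over all subsets of $\M$, but the substantive argument is the same as yours.
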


\begin{proof}
Consider a permutation (resp. polyomino) class \C, denote by $\B$ its $p$-basis, 
and assume that $\B$ is a minimal $m$-basis of \C. 
Let us also denote $\M$ the canonical $m$-basis of \C. 
From Proposition~\ref{prop:minimal_m-basis}, $\B$ is a subset of $\M$, 
and we see that Proposition~\ref{prop:when_p-basis_is_minimal} will follow if we prove that: 
for all subsets $\X$ of $\M$, \\
\hspace*{0.3cm} $(i)$ either $\X=\B$,\\
\hspace*{0.3cm} $(ii)$ or $\C\neq\Avperm(\X)$ (resp. $\Avp(\X)$),\\
\hspace*{0.3cm} $(iii)$ or $\X$ contains a strict subset $\X'$ such that $\C=\Avperm(\X')$ (resp. $\Avp(\X')$).\\
So, let $\X$ be a subset of $\M$, different from $\B$. Assume that $\C=\Avperm(\X)$ (resp. $\Avp(\X)$), 
and let us prove that $(iii)$ holds. 

First we claim that $\B \subseteq \X$. 
Indeed, assuming the contrary, there would exists $b \in \B$ such that $b \notin \X$. 
$\B$ being the $p$-basis of \C, we deduce that $b$ is a permutation (resp. polyomino) such that $b \notin \C$.
Because $\C=\Avperm(\X)$ (resp. $\Avp(\X)$), we know that $b$ contains a pattern in $\X$. 
And since $b \notin \X$, it has to be a strict pattern, that we denote $b'$. 
The claim is then proved, since $b \in \B \subseteq \M$ and $b'\in \X \subseteq \M$ are two distinct elements of $\M$, such that $b' \preccurlyeq b$, 
contradicting that $\M$ is an antichain. 

Knowing that $\B \subseteq \X$ holds, we conclude the proof as follows. 
Because $\X \neq \B$, $\B$ is a strict subset of $\X$. 
By hypothesis  $\C=\Avperm(\B)$ (resp. $\Avp(\B)$), so that $\X$ contains a strict subset $\X'=\B$ such that $\C=\Avperm(\X')$ (resp. $\Avp(\X')$).
\end{proof}

Proposition~\ref{prop:when_p-basis_is_minimal} applies for instance to the class \V of vertical bars (see Examples \ref{ex:W1} and \ref{ex:all_minimal_m-basis}) 
or to the case of parallelogram polyominoes (in Section~\ref{sec:known_classes_poly}).

\begin{proposition}   
\label{prop:uniqueness_minimal_m-basis}
Let \C be a permutation (resp. polyomino) class and \M be its canonical $m$-basis. 
Assuming that for all $M\in$ \M, 
there exists a permutation (resp. polyomino) $P$ such that $M \preccurlyeq P$ and
for all $M'\in$ \M (with $M'\neq M$) $M'{\not\preccurlyeq} P$, 
then \M is a minimal $m$-basis. Consequently \C has a unique minimal $m$-basis.
\end{proposition}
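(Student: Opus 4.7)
The plan is to show directly that the canonical $m$-basis $\M$ itself satisfies the two conditions of Definition~\ref{def:minimal_m-basis}, and then invoke Proposition~\ref{prop:minimal_m-basis} to deduce uniqueness. By Proposition~\ref{prop:canonical_m-basis} we already know $\C = \Avperm(\M)$ (resp. $\Avp(\M)$), so it remains to verify the minimality conditions. I will write $\Av$ for $\Avperm$ or $\Avp$ indifferently.

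For Condition~$(1.)$, let $\M' \subsetneq \M$ and pick $M \in \M \setminus \M'$. The hypothesis supplies a permutation (resp. polyomino) $P$ with $M \preccurlyeq P$ but $M'' \not\preccurlyeq P$ for every $M'' \in \M \setminus \{M\}$; in particular no element of $\M'$ is a submatrix of $P$, so $P \in \Av(\M')$. On the other hand $M \in \M$ so $M \notin \C^+$ by definition of the canonical $m$-basis, which forces $P \notin \C$ since $M \preccurlyeq P$. Hence $\Av(\M') \neq \C$.

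For Condition~$(2.)$, take $M \in \M$ and a proper submatrix $M'$ of $M$, and set $\M' = (\M \setminus \{M\}) \cup \{M'\}$. By minimality of $M$ in $\M$ for $\preccurlyeq$, the proper submatrix $M'$ lies in $\C^+$, so there exists $Q \in \C$ with $M' \preccurlyeq Q$. Then $Q \notin \Av(\M')$, which gives $\C \nsubseteq \Av(\M')$ and hence $\C \neq \Av(\M')$, as required. This establishes that $\M$ is a minimal $m$-basis of $\C$.

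Finally, for uniqueness, apply Proposition~\ref{prop:minimal_m-basis}: every minimal $m$-basis of $\C$ is a subset $\B$ of $\M$ that is minimal for inclusion under $\C = \Av(\B)$. But the argument given for Condition~$(1.)$ shows that \emph{no} proper subset of $\M$ satisfies $\C = \Av(\B)$, so the only such $\B$ is $\M$ itself. Thus $\M$ is the unique minimal $m$-basis of $\C$. I do not anticipate any real obstacle here; the only subtle point is recognizing that the witnesses $P$ in the hypothesis are elements \emph{outside} $\C$ (since they contain some $M \in \M$), which is exactly what is needed to separate $\Av(\M')$ from $\C$ when $\M' \subsetneq \M$.
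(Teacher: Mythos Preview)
Your proof is correct and follows essentially the same line as the paper's: the key step---using the witness $P$ from the hypothesis to show that no proper subset $\M' \subsetneq \M$ satisfies $\C = \Av(\M')$, and then invoking Proposition~\ref{prop:minimal_m-basis} for uniqueness---is identical. The only difference is that you verify Condition~$(2.)$ of Definition~\ref{def:minimal_m-basis} explicitly, whereas the paper appeals to Proposition~\ref{prop:minimal_m-basis} from the outset (which already handles Condition~$(2.)$), so your argument is slightly longer but equally valid.
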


\begin{proof}
Considering a proper subset $\M'$ of \M, from Proposition \ref{prop:minimal_m-basis} it is enough to prove that $\Avperm(\M')\neq\Avperm(\M)$ (resp. $\Avp(\M')\neq\Avp(\M)$).
Since $\M'\varsubsetneq \M$ there exists $M''\in \M$ such that $M''\notin\M'$. 
By assumption, there is a permutation (resp. polyomino) $P$ such that $M''\preccurlyeq P$ but $M'{\not\preccurlyeq} P$ for all $M' \in \M, M'\neq M''$. Hence $\Avperm(\M')\neq\Avperm(\M)$ (resp. $\Avp(\M')\neq\Avp(\M)$).
\end{proof}

\section{Relations between the $p$-basis and the $m$-bases}
\label{sec:from_one_basis_to_another}

A permutation (resp. polyomino) class being now equipped with several notions of basis,
we investigate how to describe one basis from another, and focus here on describing the $p$-basis from any $m$-basis.

\begin{proposition}
Let \C be a permutation (resp. polyomino) class, and let \M be an $m$-basis of  $\C$.
Then the $p$-basis of \C consists of all permutations (resp. polyominoes)
that contain a submatrix in \M,
and that are minimal (w.r.t. $\sympattern$ resp. $\polypattern$) for this property.
\label{prop:description_p-basis}
\end{proposition}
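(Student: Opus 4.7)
The plan is to use Proposition~\ref{prop:perm_basis} (resp. Proposition~\ref{prop:polyomino_basis}) as the starting point: the $p$-basis of \C is exactly the set of permutations (resp. polyominoes) that are minimal (w.r.t.\ $\sympattern$ resp.\ $\polypattern$) among those not belonging to \C. All that remains is to show that ``not belonging to \C'' can be replaced by ``containing a submatrix in $\M$'' in that characterization.

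First I would invoke the hypothesis that $\M$ is an $m$-basis of \C. By Definition~\ref{def:m-basis}, this means $\C = \Avperm(\M)$ (resp. $\C = \Avp(\M)$). Unfolding Definition~\ref{def:submatrix_avoidance}, this is exactly the statement that, for a permutation $\sigma$ (resp. polyomino $P$), we have $\sigma \notin \C$ (resp. $P \notin \C$) if and only if $\sigma$ (resp. $P$) contains some element of $\M$ as a submatrix. Call this the property $(\star)$.

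Next I would combine $(\star)$ with Proposition~\ref{prop:perm_basis} (resp. Proposition~\ref{prop:polyomino_basis}). The $p$-basis of \C consists of the permutations (resp. polyominoes) that are minimal (w.r.t.\ $\sympattern$ resp.\ $\polypattern$) for the property of not belonging to \C. By $(\star)$, this property is equivalent to that of containing some element of $\M$ as a submatrix. Since the two properties coincide on the whole set $\sym$ (resp. $\poly$), their minimal elements for $\sympattern$ (resp. $\polypattern$) coincide too, which is precisely the statement of Proposition~\ref{prop:description_p-basis}.

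I do not anticipate any real obstacle: the statement is essentially a direct rewriting obtained by substituting one defining property by an equivalent one. The only mild subtlety to keep in mind is that the minimality is taken with respect to the pattern order $\sympattern$ (resp. $\polypattern$) on permutations (resp. polyominoes), not with respect to the submatrix order $\preccurlyeq$ on all binary matrices; but this does not affect the argument since we apply $(\star)$ only to objects that are already permutations (resp. polyominoes).
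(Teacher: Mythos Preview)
Your proposal is correct and follows essentially the same approach as the paper's own proof, which simply states that the result follows immediately from the description of the $p$-basis in Proposition~\ref{prop:perm_basis} (resp.~\ref{prop:polyomino_basis}) together with the definition of $m$-basis. Your write-up merely unpacks this one-line justification in more detail, including the observation that minimality is taken in $(\sym,\sympattern)$ (resp.\ $(\poly,\polypattern)$), which is a fair clarifying remark but not an additional idea.
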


\begin{proof}
This follows immediately from the description of the $p$-basis of \C in 
Proposition~\ref{prop:perm_basis} (resp.~\ref{prop:polyomino_basis}) 
and the definition of $m$-basis. 
\end{proof}

\begin{example}
Figures~\ref{fig:basisRectangularClass} and~\ref{fig:basis321_231_312}
give the $p$-bases of the classes $\A$ and $\R$ of Example~\ref{ex:canonical_m-basis}, 
and illustrate their relation to their canonical $m$-basis.
\label{ex:relation_p-_and_m-basis}
\end{example}

\begin{figure}[ht]
\begin{center}
\includegraphics[width=9cm]{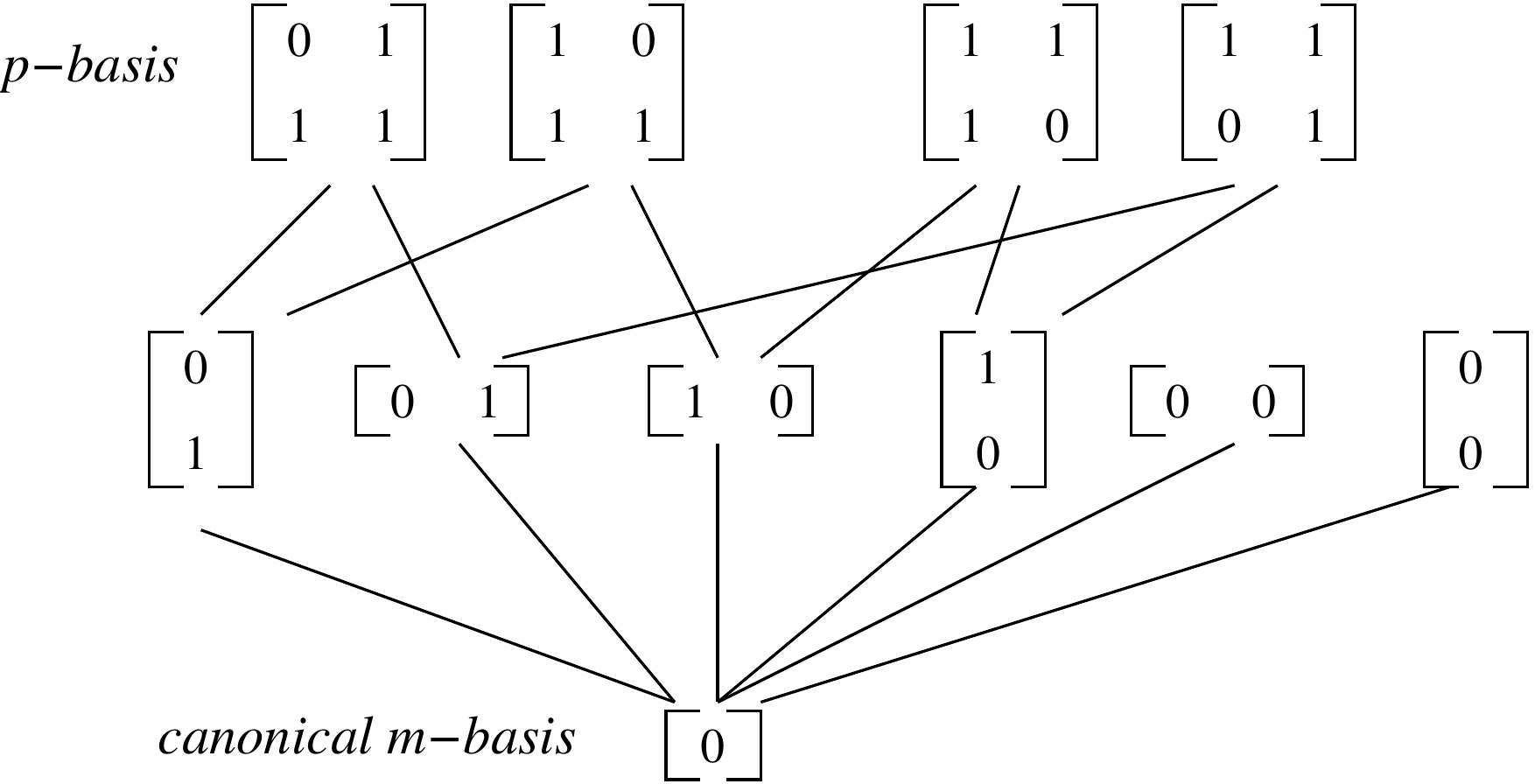}
\caption{The $p$-basis, an $m$-basis and the canonical $m$-basis of the class $\R$ of polyominoes having rectangular shape.}
\label{fig:basisRectangularClass}
\end{center}
\end{figure}

\begin{figure}[ht]
\begin{center}
\includegraphics[width=10cm]{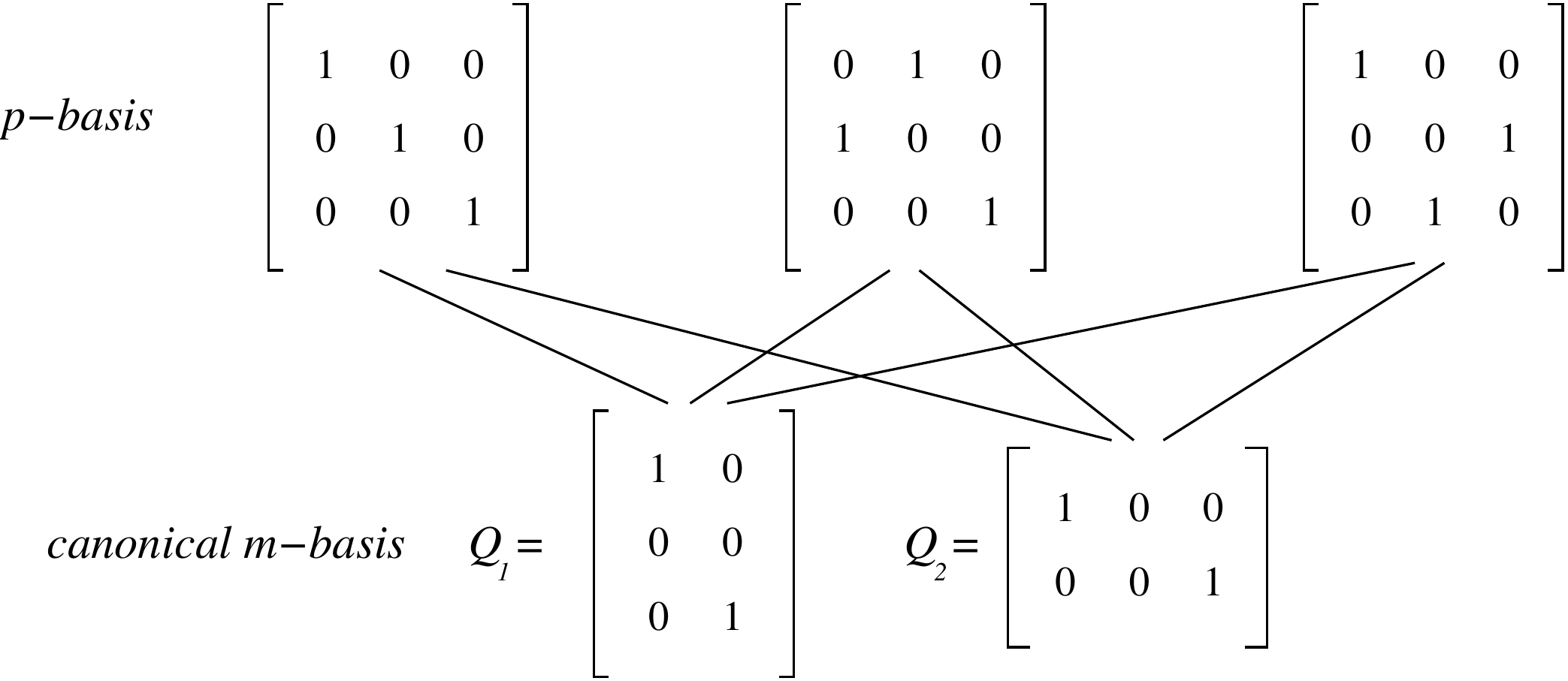}
\caption{The $p$-basis and the canonical $m$-basis of $\A = \Avperm(321,231,312)$.}
\label{fig:basis321_231_312}
\end{center}
\end{figure}  

In the case of permutation classes,
Proposition~\ref{prop:description_p-basis} allows to compute the $p$-basis of any class \C, given an $m$-basis of \C.
Indeed, the minimal permutations (in the sense of $\sympattern$) that contain a given submatrix $M$ are 
straightforward to describe:

\begin{proposition}
Let $M$ be a quasi-permutation matrix.
The minimal permutations that contain $M$ are exactly those that may be obtained from $M$
by insertions of rows (resp. columns) with exactly one entry $1$,
which should moreover fall into a column (resp. row) of $0$ of $M$.
%
\label{prop:minimal_perm_containing_M}
\end{proposition}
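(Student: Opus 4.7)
My plan is to characterize the minimal permutations containing $M$ via a dimension-counting argument, handling both directions with essentially the same tools.

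Suppose $M$ has dimensions $r \times c$ with exactly $k$ ones; since $M$ is a quasi-permutation matrix, it has $r - k$ empty rows and $c - k$ empty columns. I would begin by checking the forward direction: any permutation $P$ built by the described construction contains $M$, since deleting the inserted rows and columns recovers $M$. For the construction to yield a permutation matrix at all, the insertion data is essentially forced: each empty row of $M$ must receive its $1$ from an inserted column, and each empty column of $M$ must receive its $1$ from an inserted row. Hence the number of inserted columns is $r - k$, the number of inserted rows is $c - k$, and the resulting $P$ has dimension exactly $r + c - k$.

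The central ingredient is a dimension lower bound: any permutation $P$ of dimension $n$ containing $M$ satisfies $n \geq r + c - k$. To prove this, fix an embedding of $M$ into $P$ given by a row-set $R_1$ of size $r$ and a column-set $C_1$ of size $c$. For each of the $r - k$ empty rows of $M$, the unique $1$ of the corresponding row of $P$ lies in a column outside $C_1$, and these ones occupy distinct columns, so $n - c \geq r - k$. As a consequence, any $P$ produced by the construction is minimal for $\sympattern$: a strict sub-permutation $P'$ of $P$ containing $M$ would have dimension strictly smaller than $r + c - k$, contradicting the bound.

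For the converse, let $P$ be minimal for $\sympattern$ among permutations containing $M$, fix an embedding $(R_1, C_1)$, and set $R_2 = [n] \setminus R_1$, $C_2 = [n] \setminus C_1$. A block-count of the $n$ ones of $P$ yields $k$ ones in $R_1 \times C_1$, $r - k$ in $R_1 \times C_2$ (one per empty row of $M$), $c - k$ in $R_2 \times C_1$ (one per empty column of $M$), and therefore $n - r - c + k$ in $R_2 \times C_2$. If $n > r + c - k$, one may pick a $1$ at some $(i, j) \in R_2 \times C_2$ and delete row $i$ together with column $j$, obtaining a sub-permutation of $P$ of dimension $n - 1$ that still contains $M$ (since $R_1$ and $C_1$ are untouched), contradicting minimality. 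Therefore $n = r + c - k$, the block $R_2 \times C_2$ contains no $1$, and the $1$'s in $R_2 \times C_1$ and $R_1 \times C_2$ are precisely the inserted-row and inserted-column $1$'s described in the statement. The only mildly delicate point is that the embedding $(R_1, C_1)$ of $M$ into $P$ need not be unique; it suffices to fix any one such embedding, as the block-counting identity and the row/column-deletion argument are insensitive to this choice.
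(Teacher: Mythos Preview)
The paper states this proposition without proof, evidently treating it as elementary. Your argument is correct and supplies exactly the kind of justification the paper omits: the dimension lower bound $n \geq r + c - k$, the block decomposition of the $1$'s of $P$ over $R_i \times C_j$, and the deletion of a row--column pair through a $1$ in $R_2 \times C_2$ to contradict minimality are all sound and fit together cleanly. There is nothing to compare against on the paper's side, but your proof stands on its own.
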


It follows from Proposition~\ref{prop:minimal_perm_containing_M}
that the $p$-basis of a permutation class \C can be easily computed from an $m$-basis of \C.
Also, Proposition~\ref{prop:minimal_perm_containing_M} implies that:

\begin{corollary}
If a permutation class has a finite $m$-basis
(\ie is described by the avoidance of a finite number of submatrices)
then it has a finite $p$-basis.
\label{cor:finiteness_of_m-basis_for_permutations}
\end{corollary}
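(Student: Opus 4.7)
The plan is to derive the corollary as a direct consequence of Propositions~\ref{prop:description_p-basis} and~\ref{prop:minimal_perm_containing_M}, together with a short cardinality argument.

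Suppose $\M = \{M_1, \ldots, M_s\}$ is a finite $m$-basis of the permutation class $\C$. First I would invoke Proposition~\ref{prop:description_p-basis} to observe that the $p$-basis of $\C$ is contained in the union $\mathcal{F} = \bigcup_{i=1}^s \mathcal{F}_{M_i}$, where $\mathcal{F}_M$ denotes the set of permutations that are $\sympattern$-minimal among those containing $M$ as a submatrix. Since $\M$ is finite, it then suffices to prove that each $\mathcal{F}_M$ is finite.

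The central step is a size bound on the elements of $\mathcal{F}_M$ for each fixed $M \in \M$, and this is the only part that is not automatic. I would fix $M$ of dimensions $r \times c$ containing exactly $k$ entries equal to $1$ (so $k \leq \min(r, c)$, since $M$ is a quasi-permutation matrix) and apply Proposition~\ref{prop:minimal_perm_containing_M}: every $\sigma \in \mathcal{F}_M$ is obtained from $M$ by inserting $u$ rows, each carrying a single $1$ placed in a column of $0$ of $M$, and $v$ columns, each carrying a single $1$ placed in a row of $0$ of $M$. The resulting matrix must be an $n \times n$ permutation matrix with $n = r + u = c + v$, and counting its $1$s yields $k + u + v = n$. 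These relations force $u = c - k$ and $v = r - k$; hence every $\sigma \in \mathcal{F}_M$ has size exactly $r + c - k$, and $\mathcal{F}_M \subseteq \sym_{r + c - k}$ is finite.

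Taking the union of the finitely many finite sets $\mathcal{F}_{M_i}$ then gives finiteness of $\mathcal{F}$, and therefore of the $p$-basis of $\C$. The main (and only mildly delicate) obstacle I see is the bookkeeping in the size count; once the identities $u = c - k$ and $v = r - k$ are established, the rest of the argument is mechanical from the two cited propositions.
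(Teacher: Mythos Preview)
Your proof is correct and follows the same approach as the paper, which simply states that the corollary is implied by Proposition~\ref{prop:minimal_perm_containing_M}. You have made explicit the finiteness argument that the paper leaves to the reader: namely, that the construction in Proposition~\ref{prop:minimal_perm_containing_M} forces every $\sympattern$-minimal permutation containing an $r\times c$ quasi-permutation matrix with $k$ ones to have size exactly $r+c-k$, so that each $\mathcal{F}_M$ is finite.
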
  

The situation is more complex if we consider polyomino classes.
The description of the polyominoes containing a given submatrix
is not as straightforward as in Proposition~\ref{prop:minimal_perm_containing_M},
and the analogue of Corollary~\ref{cor:finiteness_of_m-basis_for_permutations} does not hold for polyomino classes, 
as illustrated in Proposition~\ref{prop:infinite_basis}.

\begin{proposition}
The polyomino class $\C= \Avp(M_{inf})$ defined by the avoidance of
$
M_{inf}= {\footnotesize 
\left[\begin{array}{cccc}
1 & 0 & 0 & 1\\
1 & 1 & 0 & 1
\end{array}\right] }
$
has an infinite $p$-basis.
\label{prop:infinite_basis}
\end{proposition}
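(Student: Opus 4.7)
The plan is to establish the claim by exhibiting an infinite antichain inside the $p$-basis of $\C = \Avp(M_{inf})$. By Proposition~\ref{prop:polyomino_basis}, this $p$-basis equals the set of polyominoes that are minimal for $\polypattern$ among those containing $M_{inf}$ as a submatrix. Observe that two distinct such minimal polyominoes are automatically incomparable: if $P \polypattern P'$ strictly while both are minimal containers of $M_{inf}$, then $P$ would be a strict pattern of $P'$ that still contains $M_{inf}$, contradicting the minimality of $P'$. Therefore it suffices to construct an infinite family $\{P_n\}_{n \geq 1}$ of pairwise distinct polyominoes, each minimal with respect to containing $M_{inf}$ as a submatrix.

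The key structural observation guiding the construction is that the 1-cells of $M_{inf}$ form two edge-disconnected components: a \emph{left L-shape} at columns 1--2 (cells $(1,1)$, $(1,2)$, $(2,1)$ in $M_{inf}$) and a \emph{right bar} at column 4 (cells $(1,4)$, $(2,4)$), separated by the all-zero column 3. Hence every polyomino containing $M_{inf}$ as a submatrix must include additional 1-cells forming a \emph{bridge} connecting these two components through rows or columns lying outside the four columns of the $M_{inf}$-embedding. I would define $P_n$, for each $n \geq 1$, as a polyomino in which the bridge takes the form of a staircase path of combinatorial complexity proportional to $n$, and in which the embedding of $M_{inf}$ occurs at a uniquely determined pair of rows and set of four columns. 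As a base case, one checks directly that the $3 \times 4$ polyomino obtained by placing a full row of four $1$'s on top of $M_{inf}$ is minimal.

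To verify that each $P_n$ is minimal, I would proceed by case analysis on all possible row/column removals: for each such removal I would show either that the resulting matrix is disconnected (hence is not a polyomino) or that it no longer admits an embedding of $M_{inf}$, because its dimensions are too small or because no pair of rows realizes the two specific $0$-$1$ patterns $[1,0,0,1]$ and $[1,1,0,1]$ of $M_{inf}$'s rows at any four common columns. The main obstacle lies in designing the staircase bridge so that every bridge row and column is genuinely essential. The subtle point is that removing a row (resp.\ column) of a polyomino matrix fuses the two surrounding rows (resp.\ columns), creating new edge-adjacencies between their 1-cells that may reconnect previously disconnected parts; this ``short-circuiting'' effect must be precluded by arranging the 1-cells of the bridge so that no such fusion restores connectivity across the $M_{inf}$-gap between the left L-shape and the right bar. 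Once a family $\{P_n\}_{n\geq 1}$ with these properties is constructed and verified, its infinitude directly yields the infinitude of the $p$-basis of $\C$.
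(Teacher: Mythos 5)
Your reduction is the same as the paper's: by Proposition~\ref{prop:polyomino_basis} the $p$-basis of $\Avp(M_{inf})$ is the set of polyominoes minimal (for $\polypattern$) among those containing $M_{inf}$, distinct minimal containers are pairwise incomparable, and so it suffices to exhibit infinitely many of them; your observation that the two connected blocks of $1$'s in $M_{inf}$ force every containing polyomino to include a bridge is also correct. But the proof stops exactly where the real work begins. The family $\{P_n\}$ is never defined: ``a staircase bridge of combinatorial complexity proportional to $n$'' is a statement of intent, not a construction, and the minimality check is only announced (``I would proceed by case analysis''), not performed --- the only thing actually verified is a single $3\times 4$ polyomino, which cannot yield infinitude. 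Moreover, the obstacle you name yourself is the genuine crux: deleting a row or column of a polyomino matrix fuses its two neighbours, and for a generic staircase bridge such a fusion can leave a connected configuration that still contains $M_{inf}$, so that the candidate fails to be minimal; arranging the cells so that \emph{every} row or column deletion either disconnects the figure or destroys all occurrences of $M_{inf}$ is precisely the content of the paper's proof, which exhibits an explicit sequence (Figure~\ref{fig:infinite}), checks that every occurrence of $M_{inf}$ in a member of the sequence involves the two bottommost rows, the two leftmost columns and the rightmost column, and then verifies minimality directly. As it stands, your proposal is a correct plan with the central construction and its verification missing, so it does not establish the proposition.
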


\begin{proof}
It is enough to exhibit an infinite sequence of polyominoes containing $M_{inf}$,
and that are minimal (for $\polypattern$) for this property.
By minimality of its elements, such a sequence is necessarily an antichain,
and it forms an infinite subset of the $p$-basis of $\C$.
The first few terms of such a sequence are depicted in Figure~\ref{fig:infinite},
and the definition of the generic term of this sequence should be clear from the figure.
We check by comprehensive verification that
every polyomino $P$ of this sequence contains $M_{inf}$,
and additionally that occurrences of $M_{inf}$ in $P$ always involve
the two bottommost rows of $P$, its two leftmost columns, and its rightmost column.
Moreover, comprehensive verification also shows that
every polyomino $P$ of this sequence is minimal for the condition $M_{inf} \polypattern P$,
\ie that every polyomino $P'$ occurring in such a $P$ as a proper submatrix avoids $M_{inf}$.
Indeed, the removal of rows or columns from such a polyomino $P$ either disconnects it or removes all the occurrences of $M_{inf}$.
\end{proof}

\begin{figure}[ht]
\begin{center}
\includegraphics[width=11cm]{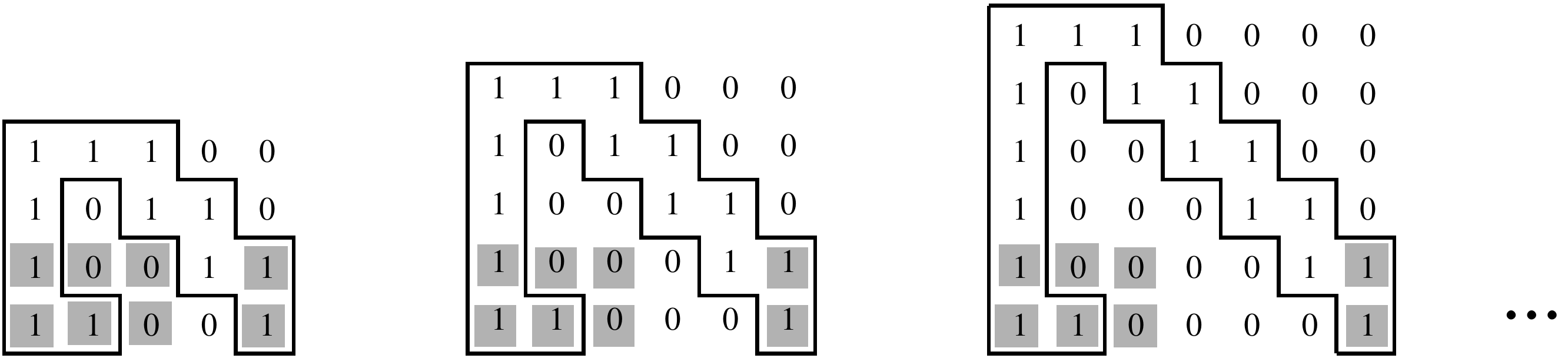}
\caption{An infinite antichain of polyominoes belonging to the $p$-basis of $\Avp(M_{inf})$.}
\label{fig:infinite}
\end{center}
\end{figure}

\section{Some permutation classes defined by submatrix avoidance}
\label{sec:known_classes_perm}

Many notions of pattern avoidance in permutations have been considered in the literature.
The one of submatrix avoidance that we have considered is yet another one.
In this section, we start by explaining how it relates to other notions of pattern avoidance.
Then, using this approach to pattern avoidance, we give simpler proofs of the enumeration of some permutation classes.
Finally, we show how these proofs can be brought to a more general level,
to prove Wilf-equivalences of many permutation classes.

\subsection{Connection of submatrix avoidance to the avoidance of other generalized permutation patterns}

A first generalization of pattern avoidance in permutations
introduces \emph{adjacency constraints} among the elements of a permutation that should form an occurrence of a (otherwise classical) pattern.
Such patterns with adjacency constraints are known as \emph{vincular} and \emph{bivincular} patterns \cite{BMCDK},
and a generalization with additional border constraints has recently been introduced  by~\cite{U}.

The avoidance of submatrices in permutations can be seen as a dual notion
to the avoidance of vincular and bivincular patterns, in the following sense.
In an occurrence of some vincular (resp. bivincular) pattern in a permutation $\sigma$,
we impose that some elements of the domain must be adjacent (resp. mapped by $\sigma$ into consecutive values). 
Now, consider a quasi-permutation matrix $M$, and let $\pi$ be the largest permutation contained in $M$. 
If there is a column (resp. row) of $0$'s in $M$, then
an occurrence of $M$ in a permutation $\sigma$ is
an occurrence of $\pi$ in $\sigma$
where some elements of the domain of $\sigma$ are not allowed to be adjacent
(resp. mapped by $\sigma$ into consecutive values).

\begin{example} 
$\Avperm \left({\footnotesize \left[ \begin{array}{cccc}
0 & 0 & 1 & 0 \\
1 & 0 & 0 & 0 \\
0 & 0 & 0 & 1 \end{array} \right]}\right)$ denotes the set of all permutations such that
in any occurrence of $231$ the elements mapped into $``2"$ and $``3"$ are at adjacent positions.

\noindent $\Avperm \left({\footnotesize\left[ \begin{array}{cccc}
0 & 0 & 1 & 0 \\
1 & 0 & 0 & 0 \\
0 & 0 & 0 & 0 \\
0 & 0 & 0 & 1 \end{array} \right]}\right)$ denotes the set of all permutations such that
in any occurrence of $231$ the elements mapped into $``2"$ and $``3"$ are at adjacent positions,
or the actual values of $``1"$ and $``2"$ are consecutive numbers.

\noindent $\Avperm \left({\footnotesize\left[ \begin{array}{ccc}
0 & 0 & 0 \\
0 & 1 & 0 \\
1 & 0 & 0 \\
0 & 0 & 1 \end{array} \right]}\right)$ denotes the set of all permutations such that
in any occurrence of $231$ the actual value of $``3"$ is the maximum of the permutation.
\end{example}

As noticed in Remark~\ref{rem:submatrix_avoidance_implies_class},
sets of permutations defined by avoidance of submatrices are permutation classes.
On the contrary, avoidance of vincular or bivincular patterns 
(although very useful for characterizing important families of permutations, like Baxter permutations~\cite{GBaxter})
does not in general describe sets of permutations that are downward closed for $\sympattern$. 
Hence, in the context of permutation \emph{classes}, 
the above discussion suggests that it is more convenient to introduce
\emph{non-adjacency} (instead of adjacency) constraints in permutation patterns, 
which correspond to rows and columns of $0$ in quasi-permutation matrices.

\emph{Mesh patterns} are another generalization of permutation patterns that has been introduced more recently by Br\"{a}nd\'{e}n and Claesson~\cite{mesh}.
It has itself been generalized in several way, in particular by \'Ulfarsson who introduced in~\cite{U} the notion of marked mesh patterns.
It is very easy to see that the avoidance of a quasi-permutation matrix with no uncovered $0$ entries\footnote{
A $0$ entry is \emph{covered} when there is a $1$ entry in its column or row (or both).}
can be expressed as the avoidance of a special form of marked-mesh pattern.
Indeed, a row (resp. $k$ consecutive rows) of $0$ in a quasi-permutation matrix with no uncovered $0$ entries
corresponds to a mark, spanning the whole pattern horizontally,
indicating the presence of at least one (resp. at least $k$) element(s).
The same holds for columns and vertical marks.
Figure~\ref{fig:marked-mesh_pattern} shows an example of a quasi-permutation matrix with no uncovered $0$ entries
with the corresponding marked-mesh pattern.

\begin{figure}[ht]
\begin{center}
\includegraphics[width=8cm]{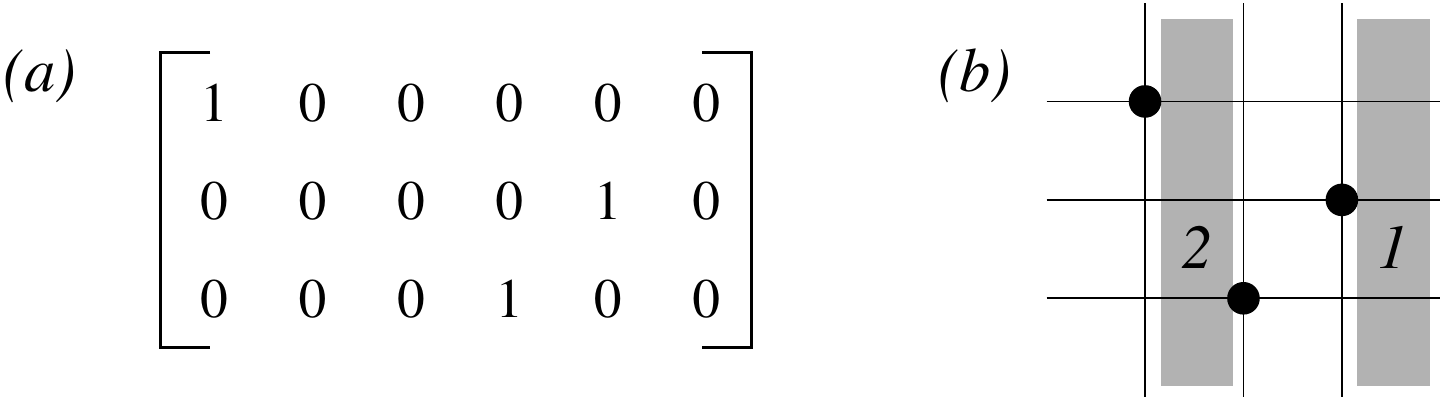}
\caption{A quasi-permutation matrix with no uncovered $0$ entries
and the corresponding marked-mesh pattern.}
\label{fig:marked-mesh_pattern}
\end{center}
\end{figure}

\subsection{A different look at some known permutation classes}

Several permutation classes avoiding three patterns of size $3$ or four patterns of size $4$ that have been studied in the literature
(and are referenced in Guibert's catalog~\cite[Appendix A]{GuibertThese})
are easier to describe with the avoidance of just one submatrix, as we explain in the following.
For some of these classes, the description by submatrix avoidance also allows to provide a simple proof of their enumeration.

In this paragraph, we do not consider classes that are equal up to symmetry (reverse, inverse, complement, and their compositions).
But the same results of course apply (up to symmetry) to every symmetry of each class considered.

\paragraph*{The class $\F = \Avperm(123,132,213)$.}
This class was studied by Simion and Schmidt~\cite{SimionSchmidt},
in the context of the systematic enumeration of permutations avoiding patterns of size $3$.
An alternative description of $\F$ is 
$
\F = \Avperm(M_F)$ where $M_F = {\footnotesize \left[ \begin{array}{ccc}
0 & 0 & 1 \\
1 & 0 & 0 \end{array} \right]}\text{.}
$
It follows immediately since $123,132$ and $213$ are exactly the permutations which cover $M_F$
in the sense of Propositions~\ref{prop:description_p-basis} and~\ref{prop:minimal_perm_containing_M}.

\cite{SimionSchmidt} shows that $\F$ is enumerated by the Fibonacci numbers.
Of course, it is possible to use the description of $\F$ by the avoidance of $M_F$ 
to prove this enumeration result. 
However, in this case the proof would just rephrase the original one of~\cite{SimionSchmidt}.

\paragraph*{The class $\G = \Avperm(123,132,231)$.}
This class is also studied in~\cite{SimionSchmidt}, where it is shown that there are $n$ permutations of size $n$ in $\G$.
The enumeration is obtained by a simple inductive argument,
which relies on a recursive description of the permutations in $\G$.

For the same reasons as in the case of $\F$, $\G$ is alternatively described by
$
\G = \Avperm(M_G)$ where $M_G = {\footnotesize\left[ \begin{array}{ccc}
0 & 1 & 0 \\
1 & 0 & 0 \end{array} \right]}\text{.}
$
Any occurrence of a pattern $12$ in a permutation $\sigma$ can be extended to an occurrence of $M_G$,
as long as it does not involve the last element of $\sigma$.
So from this characterization, it follows that the permutations of $\G$ are exactly the decreasing sequences followed by one element.
This describes the permutations of $\G$ non-recursively, and gives immediate access to the enumeration of $\G$.

\paragraph*{The classes $\H = \Avperm(1234,1243,1423,4123)$, $\J = \Avperm(1324,1342,1432,4132)$ and $\K = \Avperm(2134,2143,2413,4213)$.}

These three classes have been studied in~\cite[Section 4.2]{GuibertThese},
where it is proved that they are enumerated by the central binomial coefficients.
The proof first gives a generating tree for these classes,
and then the enumeration is derived analytically from the corresponding rewriting system.
In particular, this proof does not provide a description of the permutations in $\H$, $\J$ and $\K$
which could be used to give a \emph{combinatorial} proof of their enumeration.
Excluded submatrices can be used for that purpose.

As before, because the $p$-basis of $\H$, $\J$ and $\K$ are exactly the permutations which cover the matrices $M_H$, $M_J$ and $M_K$ given below,
introducing the matrices 
$M_H = {\footnotesize\left[ \begin{array}{ccc}
0 & 0 & 0 \\
0 & 0 & 1 \\
0 & 1 & 0 \\
1 & 0 & 0 \end{array} \right]}$, 
$M_J = {\footnotesize\left[ \begin{array}{ccc}
0 & 0 & 0 \\
0 & 1 & 0 \\
0 & 0 & 1 \\
1 & 0 & 0 \end{array} \right]}$ 
and 
$M_K = {\footnotesize\left[ \begin{array}{ccc}
0 & 0 & 0  \\
0 & 0 & 1  \\
1 & 0 & 0  \\
0 & 1 & 0 \end{array} \right]}$, 
we have $\H = \Avperm(M_H)$, $\J = \Avperm(M_J)$ and $\K = \Avperm(M_K)$. 

Similarly to the case of $\G$, any occurrence of a pattern $123$ (resp. $132$, resp. $213$)
in a permutation $\sigma$ can be extended to an occurrence of $M_H$ (resp. $M_J$, resp. $M_K$),
as long as it does not involve the maximal element of $\sigma$.
Conversely, if a permutation $\sigma$ contains $M_H$ (resp. $M_J$, resp. $M_K$),
then there is an occurrence of $123$ (resp. $132$, resp. $213$) in $\sigma$ that does not involves its maximum.
Consequently, the permutations of $\H$ (resp. $\J$, resp. $\K$) are exactly those of avoiding $123$ (resp. $132$, resp. $213$)
to which a maximal element has been added.
This provides a very simple description of the permutations of $\H$, $\J$ and $\K$.
Moreover, recalling that for any permutation $\pi \in \sym_3$ $\Avperm(\pi)$ is enumerated by the Catalan numbers,
it implies that the number of permutations of size $n$ in $\H$ (resp. $\J$, resp. $\K$) is $n\times Cat_{n-1} = {{2n-2}\choose {n-1}}$.

\subsection{Propagating enumeration results and Wilf-equivalences with submatrices}

The similarities that we observed between the cases of the classes $\G$, $\H$, $\J$ and $\K$ are not a coincidence.
Indeed, they can all be encapsulated in the following proposition, which simply pushes the same idea forward to a general setting.

\begin{proposition}
Let $\tau$ be a permutation.
Let $M_{\tau,top}$ (resp. $M_{\tau,bottom}$) be the quasi-permutation matrix obtained by adding
a row of $0$ entries above (resp. below) the permutation matrix of $\tau$.
Similarly, let $M_{\tau,right}$ (resp. $M_{\tau,left}$) be the quasi-permutation matrix obtained by adding
a column of $0$ entries on the right (resp. left) of the permutation matrix of $\tau$.
The permutations of $\Avperm(M_{\tau,top})$ (resp. $\Avperm(M_{\tau,bottom})$, resp. $\Avperm(M_{\tau,right})$, resp. $\Avperm(M_{\tau,left})$)
are exactly the permutations avoiding $\tau$ to which a maximal (resp. minimal, resp. last, resp. first) element has been added.
\label{prop:perm+0_avoidance}
\end{proposition}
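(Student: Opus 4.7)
The plan is to translate $\sigma \in \Avperm(M_{\tau,top})$ directly into a condition on where the maximum of $\sigma$ can appear, and then reduce the three other cases to this one by symmetry.

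First, I would unpack the containment relation. Writing $k = |\tau|$, an occurrence of $M_{\tau,top}$ in $\sigma$ is the choice of $k$ columns $c_1 < \dots < c_k$ and $k+1$ rows $r_0 > r_1 > \dots > r_k$ of $M_\sigma$ whose restriction equals $M_{\tau,top}$. The bottom $k$ rows together with the $k$ chosen columns then form an occurrence of $\tau$ in $\sigma$, while the extra top row $r_0$, being zero in each selected column, must carry its unique $1$ in some column outside $\{c_1,\dots,c_k\}$.

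Second, I would establish the combinatorial equivalence: $\sigma$ contains $M_{\tau,top}$ if and only if $\sigma$ has an occurrence of $\tau$ whose values are all strictly less than $\max(\sigma) = |\sigma|$. The ``only if'' direction is immediate from the unpacking, since the $1$ in row $r_0$ corresponds to a value strictly greater than all values of the occurrence. For ``if'', take an occurrence of $\tau$ in $\sigma$ not using the maximum $n = |\sigma|$, and let $r_0 = n$: the unique $1$ in this row of $M_\sigma$ sits in column $\sigma^{-1}(n)$, which is automatically disjoint from the columns of the occurrence (otherwise $n$ would already appear among the values of that occurrence).

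Third, taking the contrapositive gives $\sigma \in \Avperm(M_{\tau,top})$ iff every occurrence of $\tau$ in $\sigma$ involves the maximum $n$, which is exactly the condition that the permutation obtained from $\sigma$ by deleting $n$ avoids $\tau$. This is precisely the description \emph{``$\sigma$ is a permutation avoiding $\tau$ to which a maximal element has been added''}.

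The three remaining cases follow by the same argument, with ``maximum value'' replaced by ``minimum value'' for $M_{\tau,bottom}$, by ``rightmost position'' for $M_{\tau,right}$, and by ``leftmost position'' for $M_{\tau,left}$. Equivalently, they can be derived from the top case by applying the complement, reverse, and reverse-complement symmetries of permutations, which commute with the corresponding insertion of a zero row/column on the matrix side. The main obstacle is purely notational — careful bookkeeping of rows, columns, and the bottom-to-top numbering convention from Section~\ref{subsec:submatrix} — and no nontrivial combinatorial idea is needed beyond the direct translation above.
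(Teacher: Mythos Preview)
Your proposal is correct and follows essentially the same approach as the paper's own proof: both argue that an occurrence of $M_{\tau,top}$ in $\sigma$ is precisely an occurrence of $\tau$ that avoids the maximum of $\sigma$, then deduce the result, and dispose of the remaining three cases by symmetry. Your version is somewhat more explicit about the row/column bookkeeping, while the paper phrases each direction as a short contradiction, but the underlying argument is identical.
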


\begin{proof}
We prove the case of $M_{\tau,top}$ only, the other cases being identical up to symmetry. 

Consider a permutation $\sigma \in \Avperm(M_{\tau,top})$, and denote by $\sigma'$ the permutation obtained deleting the maximum of $\sigma$.
Assuming that $\sigma'$ contains $\tau$, then $\sigma$ would contain $M_{\tau,top}$, which contradicts $\sigma \in \Avperm(M_{\tau,top})$; hence $\sigma' \in \Avperm(\tau)$.

Conversely, consider a permutation $\sigma' \in \Avperm(\tau)$, and a permutation $\sigma$ obtained by adding a maximal element to $\sigma'$.
Assume that $\sigma$ contains $M_{\tau,top}$, and consider an occurrence of $M_{\tau,top}$ in $\sigma$.
This occurrence of $M_{\tau,top}$ cannot involve the maximum of $\sigma$, so
it yields an occurrence of $\tau$ in $\sigma'$, and hence a contradiction.
Therefore, $\sigma \in \Avperm(M_{\tau,top})$.
\end{proof}

Proposition~\ref{prop:perm+0_avoidance} has two nice consequences in terms of enumeration.
When the enumeration of the class is known,
it allows to deduce the enumeration of four other permutation classes.
Similarly, for any pair of Wilf-equivalent classes (\emph{i.e.} of permutation classes having the same enumeration),
it produces a set of eight classes which are all Wilf-equivalent. 

\begin{corollary}
Let $\C$ be a permutation class whose $p$-basis is $\B$.
Let $\M = \{M_{\tau,top} : \tau \in \B\}$.
Denote by $c_n$ the number of permutations of size $n$ in $\C$.
The permutation class $\Avperm(\M)$ is enumerated by the sequence $(n \cdot c_{n-1})_n$.
The same holds replacing $M_{\tau,top}$ with $M_{\tau,bottom}$, $M_{\tau,right}$ or $M_{\tau,left}$.
\end{corollary}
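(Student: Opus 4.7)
The plan is to lift Proposition~\ref{prop:perm+0_avoidance} from the case of a single excluded matrix $M_{\tau,top}$ to the family $\M = \{M_{\tau,top} : \tau \in \B\}$, and then apply an elementary counting argument. I will focus on the $M_{\tau,top}$ case; the three remaining cases are handled identically using the symmetric variants of Proposition~\ref{prop:perm+0_avoidance}.

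First I would establish the following bijective characterization: a permutation $\sigma$ belongs to $\Avperm(\M)$ if and only if $\sigma$ is obtained by inserting a new maximal element into some permutation $\sigma' \in \C$. Unfolding definitions, $\sigma \in \Avperm(\M)$ means that $\sigma$ avoids $M_{\tau,top}$ for every $\tau \in \B$. Applying Proposition~\ref{prop:perm+0_avoidance} to each such $\tau$ (noting that the permutation $\sigma'$ obtained by deleting the maximum of $\sigma$ does not depend on $\tau$), this is equivalent to requiring $\sigma'$ to avoid every $\tau \in \B$, i.e.\ $\sigma' \in \Avperm(\B) = \C$. Conversely, starting from any $\sigma' \in \C$ and inserting a new maximum, the same proposition guarantees avoidance of each $M_{\tau,top}$ individually, hence of the whole family $\M$.

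The enumeration then follows by counting preimages. Every $\sigma \in \Avperm(\M)$ of size $n$ decomposes uniquely as a pair $(\sigma', i)$, where $\sigma' \in \C$ has size $n-1$ and $i \in \{1, \ldots, n\}$ records the position at which the new maximum was inserted. This map is a bijection onto $\C_{n-1} \times \{1, \ldots, n\}$: injectivity is immediate since the position of the maximum in $\sigma$ determines $i$ and deleting it recovers $\sigma'$, while surjectivity is exactly the bijective characterization of the previous paragraph. Therefore there are $n \cdot c_{n-1}$ permutations of size $n$ in $\Avperm(\M)$.

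No part of the argument presents a genuine obstacle: the combinatorial content has already been packaged into Proposition~\ref{prop:perm+0_avoidance}, and the passage from a single pattern to a family simply exploits the identity $\Avperm(\M) = \bigcap_{\tau \in \B} \Avperm(M_{\tau,top})$. The only point requiring a moment of care is to verify that the insertion of a maximum truly produces $n$ distinct permutations of size $n$ from each $\sigma' \in \C_{n-1}$, which is clear from the position of the maximum being a complete invariant.
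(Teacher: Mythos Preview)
Your proof is correct and is exactly the argument the paper has in mind: the corollary is stated without proof, as an immediate consequence of Proposition~\ref{prop:perm+0_avoidance}, and your write-up spells out precisely the intended derivation via the intersection $\Avperm(\M) = \bigcap_{\tau \in \B} \Avperm(M_{\tau,top})$ together with the $n$-to-one counting of insertions of a maximum.
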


\begin{corollary} \label{cor:WE}
Let $\C_1$ and $\C_2$ be two Wilf-equivalent permutation classes whose $p$-basis are respectively $\B_1$ and $\B_2$.
Let $\M_{1,top} = \{M_{\tau,top} : \tau \in \B_1\}$, $\M_{2,top} = \{M_{\tau,top} : \tau \in \B_2\}$, 
and define $\M_{1,bottom}$, $\M_{2,bottom}$, $\M_{1,right}$, $\M_{2,right}$, $\M_{1,left}$, $\M_{2,left}$ similarly. 
The permutation classes defined by the avoidance of one of these eight sets of matrices are all Wilf-equivalent. 
\end{corollary}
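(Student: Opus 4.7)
The plan is to derive Corollary~\ref{cor:WE} as a direct consequence of the preceding corollary, which does the real work. First I verify that the preceding corollary (stated for a single class but whose $m$-basis $\M$ may contain several matrices) rests on the following straightforward extension of Proposition~\ref{prop:perm+0_avoidance} from a single pattern to a set. For a single permutation $\tau$, Proposition~\ref{prop:perm+0_avoidance} asserts that $\sigma \in \Avperm(M_{\tau,top})$ iff the permutation $\sigma'$ obtained from $\sigma$ by removing its maximum entry lies in $\Avperm(\tau)$. Since the operation ``remove the maximum'' acts on $\sigma$ without reference to $\tau$, taking the conjunction over all $\tau \in \B_1$ yields the equivalence $\sigma \in \Avperm(\M_{1,top})$ iff $\sigma' \in \Avperm(\B_1) = \C_1$. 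Consequently, size-$n$ permutations in $\Avperm(\M_{1,top})$ are in bijection with pairs $(\sigma', k)$ where $\sigma'$ is a size-$(n-1)$ permutation in $\C_1$ and $k \in \{1, \ldots, n\}$ indicates at which of the $n$ available positions the new maximum is inserted. This gives a count of $n \cdot c_{n-1}$, where $c_m$ denotes the number of permutations of size $m$ in $\C_1$. The same argument, with the three other variants of Proposition~\ref{prop:perm+0_avoidance}, yields the analogous enumeration for $\M_{1,bottom}$, $\M_{1,right}$ and $\M_{1,left}$.

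With this established, Corollary~\ref{cor:WE} is immediate. Since $\C_1$ and $\C_2$ are Wilf-equivalent by hypothesis, they share a common enumeration sequence $(c_n)_n$. Applying the observation above to $\C_i$ for each $i \in \{1,2\}$ and each positional choice $pos \in \{top, bottom, right, left\}$, every one of the eight classes $\Avperm(\M_{i,pos})$ is enumerated by the same sequence $(n \cdot c_{n-1})_n$. They are therefore pairwise Wilf-equivalent, which is the desired conclusion.

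No genuine obstacle arises in this argument. The only technical observation worth making is that avoidance of a set of matrices is, by definition, the conjunction of the avoidances of each matrix individually, and that the operation of removing (respectively inserting) an extremal element does not depend on the pattern $\tau$ being considered. Both facts are already implicit in the proof of Proposition~\ref{prop:perm+0_avoidance}, so the present proof amounts to a bookkeeping exercise that packages eight parallel applications of the previous corollary.
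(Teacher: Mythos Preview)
Your argument is correct and matches the paper's intended approach: the corollary is stated without proof in the paper, as an immediate consequence of the preceding enumeration corollary (which gives $(n\cdot c_{n-1})_n$) applied to each of the eight sets of matrices. Your write-up simply makes explicit the bookkeeping that the paper leaves to the reader.
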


\section{Submatrix avoidance in polyominoes}
\label{sec:known_classes_poly}

In this section, we show that several families of polyominoes studied in the literature can be characterized in terms of submatrix avoidance. For more details on these families of polyominoes we address the reader to \cite{mbm,DV}. We also use submatrix avoidance to introduce new polyomino classes. 

\subsection{Some known polyomino classes}

Quite a few known families of polyominoes defined by geometric constraints (see Figure~\ref{fig:polyominoes}) can be described 
by the avoidance of submatrices (which translate these geometric constraints in a natural way). 

\begin{figure}[ht]
\begin{center}
\includegraphics[width=10cm]{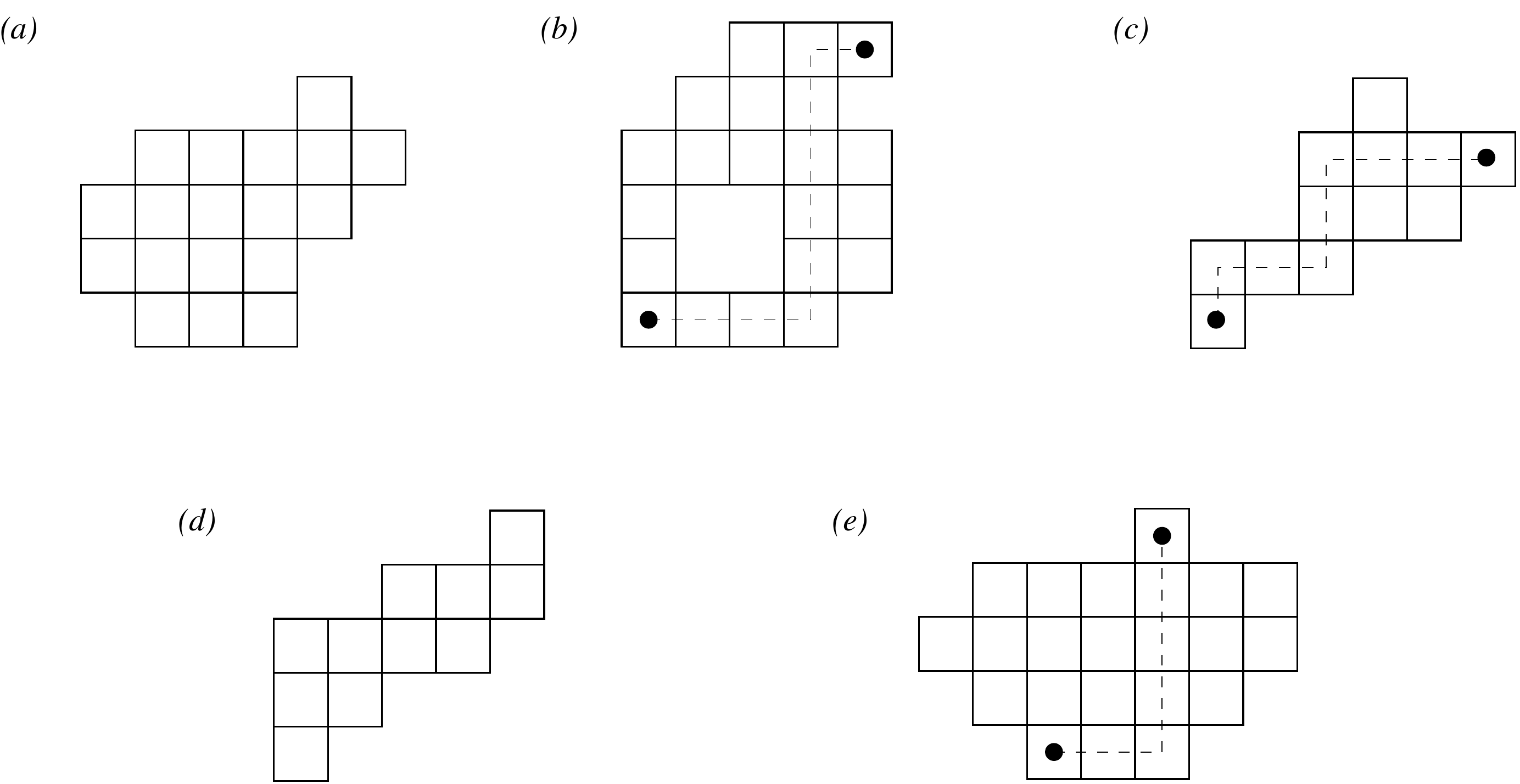}
\caption{$(a)$ a convex polyomino; $(b)$ a directed polyomino; $(c)$ a directed-convex polyomino; $(d)$ a parallelogram polyomino; $(e)$ an $L$-convex polyomino.}
\label{fig:polyominoes}
\end{center}
\end{figure}

\paragraph{Convex polyominoes.} These are defined by imposing one of the simplest geometrical constraints: the connectivity of rows/columns.
\begin{definition}
A polyomino is {\em horizontally convex} or {\em $h$-convex} (resp. {\em vertically convex} or {\em $v$-convex}), if each of its rows (resp. columns) is connected. A polyomino that is both $h$-convex and $v$-convex is said to be {\em convex}.
\end{definition}

Figure~\ref{fig:polyominoes}$(a)$ shows a convex polyomino. 
The convexity constraint can be easily expressed in terms of excluded submatrices since its definition already relies on some specific configurations that the cells of each row and column of a polyomino have to avoid: 

\begin{proposition}
\label{prop:m-basis_convex}
Convex polyominoes can be represented by the avoidance of the two submatrices 
$H = {\footnotesize \left[ \begin{array}{ccc}
1 & 0 & 1 \end{array} \right]}$ and $V = {\footnotesize\left[ \begin{array}{c}
1 \\
0 \\
1 \end{array} \right]}$.
\end{proposition}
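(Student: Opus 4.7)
The plan is to prove the two characterizations separately: namely that a polyomino $P$ is $h$-convex if and only if $P$ avoids $H$, and dually for $v$-convexity and $V$. Combining these two equivalences then gives the full statement, since a polyomino is convex precisely when it is both $h$- and $v$-convex.

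For the first equivalence, I would simply unfold the definitions. Observe that an occurrence of $H$ as a submatrix of $P$ amounts to the choice of a single row index $i$ and three column indices $j_1 < j_2 < j_3$ such that $P(i,j_1)=1$, $P(i,j_2)=0$, $P(i,j_3)=1$. On the other hand, row $i$ of $P$ fails to be connected exactly when there exist two $1$-entries in this row separated by at least one $0$-entry, \emph{i.e.} two indices $j_1 < j_3$ with $P(i,j_1)=P(i,j_3)=1$ and some $j_2$ with $j_1<j_2<j_3$ such that $P(i,j_2)=0$. These two properties are literally the same, so $P$ contains $H$ if and only if $P$ has a non-connected row, \emph{i.e.} is not $h$-convex. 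The contrapositive then gives that $P \in \Avp(H)$ if and only if $P$ is $h$-convex. The case of $V$ is obtained verbatim, exchanging the roles of rows and columns.

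Putting the two equivalences together, a polyomino $P$ belongs to $\Avp(H,V)$ if and only if it avoids $H$ (hence is $h$-convex) and avoids $V$ (hence is $v$-convex), which is exactly the definition of convexity.

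There is essentially no obstacle here: the whole argument is a direct translation of the connectivity condition on a row (resp. column) into the non-existence of a $1,0,1$ pattern along it. The only point that deserves a careful sentence is that a submatrix containment allows the three chosen columns to be non-consecutive in $P$, which is harmless because picking any zero between two ones in the row suffices to produce an occurrence of $H$.
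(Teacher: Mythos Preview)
Your proof is correct and follows exactly the approach the paper indicates: the paper does not give a detailed proof of this proposition, but simply states after it that ``the avoidance of the matrix $H$ (resp.\ $V$) indicates the $h$-convexity (resp.\ $v$-convexity)'', which is precisely the decomposition you carry out. You have simply filled in the routine details that the paper leaves implicit.
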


More precisely, the avoidance of the matrix $H$ (resp. $V$)
indicates the $h$-convexity (resp. $v$-convexity). 

\begin{remark}
It is easily checked from Definition~\ref{def:minimal_m-basis} that $\{H,V\}$ is a minimal $m$-basis of the class of convex polyominoes. 
By Proposition~\ref{prop:minimal_m-basis}, this implies that $\{H,V\}$ is included in the canonical $m$-basis of this class. 
We would like to point out that this inclusion is strict, since the matrix 
$A = {\footnotesize \left[ \begin{array}{ccccc}
0 & 0 & 1 \\
1 & 0 & 0 \\
0 & 1 & 0  
 \end{array} \right]}$ 
also belongs to this canonical $m$-basis. 
Indeed, $(i)$ there is no convex polyomino that contains~$A$, and 
$(ii)$ every proper submatrix of~$A$ is included in some convex polyomino. 
The proof of $(ii)$ is immediate by comprehensive verification, 
and $(i)$ follows easily by noticing that every polyomino containing~$A$ must have some rows or columns which are not connected. 
Finding the canonical $m$-basis of the class of convex polyominoes remains an open problem. 
%
\end{remark}

From Proposition~\ref{prop:description_p-basis}, it is also easy to determine the $p$-basis of the class of convex polyominoes: 
it is the set of four polyominoes $\{H_1,H_2,V_1,V_2\}$ depicted in Figure~\ref{fig:convex_p-basis}.

\begin{figure}[ht]
\begin{center}
\includegraphics[width=11cm]{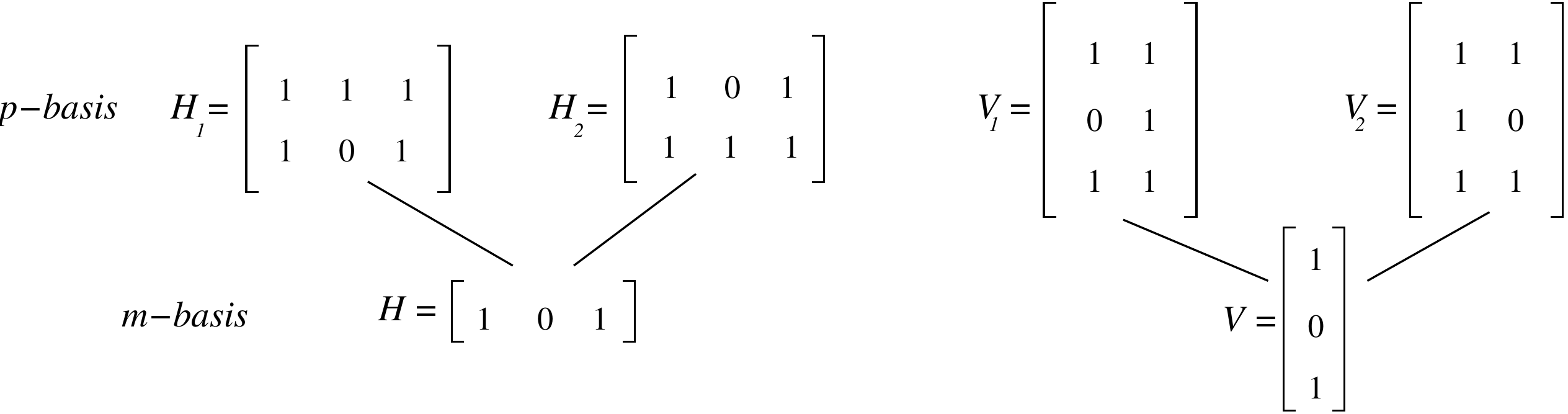}
\caption{The $p$-basis and a minimal $m$-basis of the class of convex polyominoes.}
\label{fig:convex_p-basis}
\end{center}
\end{figure}

We advise the reader that, in the rest of the section, for each polyomino class, we will provide only a matrix description of the basis. Indeed, in all these examples the $p$-basis can easily be obtained from the given $m$-basis with Proposition~\ref{prop:description_p-basis}, like in the case of convex polyominoes. 

\paragraph{Directed-convex polyominoes.}

Directed-convex polyominoes are defined using the notion of internal path of a polyomino. An {\em (internal) path} of a polyomino is a sequence of distinct cells $(c_1,\dots,c_n)$ of the polyomino such that every two consecutive cells in this sequence are edge-connected; according to the respective positions of the cells $c_i$ and $c_{i+1}$, we say that the pair $(c_i,c_{i+1})$ forms a {\em north, south, east or west step} in the path.

\begin{definition}
A polyomino $P$ is {\em directed} when every cell of $P$ can be reached
from a distinguished cell  (called the source) by a path that uses only north and east steps.
\end{definition}

Figure~\ref{fig:polyominoes}$(b)$ depicts a directed polyomino.
The reader can check 
(\emph{e.g.}, removing the fourth column on Figure~\ref{fig:polyominoes}$(b)$) 
that the set of directed polyominoes is not a polyomino class.
However, the set of \emph{directed-convex} polyominoes
(\ie of polyominoes that are both directed and convex -- see Figure~\ref{fig:polyominoes}$(c)$)
is a class:

\begin{proposition}
The class ${\cal D}$ of directed-convex polyominoes is characterized by the avoidance of the submatrices
$H= {\footnotesize
  \left[\begin{array}{ccc}
          1 & 0 & 1
         \end{array}
  \right]}$,
$V={\footnotesize \left[\begin{array}{c}
          1\\
          0\\
          1
         \end{array}
  \right]}$
and $  D={\footnotesize \left[\begin{array}{cc}
          1 & 1\\
          0 & 1
         \end{array}
  \right]}$.
\label{prop:m-basis_directed_convex}
\end{proposition}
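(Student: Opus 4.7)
The plan is to use Proposition~\ref{prop:m-basis_convex}, which identifies avoidance of $H$ and $V$ with convexity, to reduce the statement to the following claim: among convex polyominoes $P$, directedness is equivalent to avoidance of $D$. I will bridge these two conditions through the auxiliary geometric property that the bottom-left corner cell $(b,l)$ of the bounding box of $P$ belongs to $P$, where $b$ (resp. $l$) denotes the smallest row (resp. column) index of any cell of $P$.

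For directed $\Rightarrow$ avoids $D$: any source of a directed decomposition has row and column indices at most those of every cell, so it must coincide with $(b,l)$. If $P$ contained $D$ at rows $i_1<i_2$ and columns $j_1<j_2$ (with the $0$-entry at $(i_1,j_1)$), horizontal convexity of row $i_1$ would give $l_{i_1}>j_1$, where $l_{i_1}$ denotes the leftmost column with a cell in row $i_1$. I would then trace an N/E-path from $(b,l)$ to $(i_2,j_1)$: by vertical convexity of column $j_1$ combined with $(i_1,j_1)\notin P$, this path cannot enter column $j_1$ before row $i_1+1$, so when it passes through row $i_1$ it occupies some column $c<j_1$, forcing $l_{i_1}\leq c<j_1$, a contradiction. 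Conversely, assuming $P$ is convex and avoids $D$, I would first show $(b,l)\in P$: if not, the sequence $l_b,l_{b+1},\ldots,l_{b_l}$ starts strictly above $l$ and ends at $l$, so it strictly decreases at some pair of consecutive rows $m,m+1$; the fact that consecutive rows of a connected polyomino must share a column (otherwise no N/S step could cross between them) forces $l_m\leq r_{m+1}$, and then $(m,l_m),(m+1,l_m),(m+1,l_{m+1})\in P$ while $(m,l_{m+1})\notin P$, exhibiting an occurrence of $D$. Given $(b,l)\in P$, I would conclude that $P$ is directed from $(b,l)$ by induction on $i+j-b-l$, showing every cell of $P$ is reachable via N/E-steps. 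The key ingredient is the Lemma: \emph{for every $(i,j)\in P$ distinct from $(b,l)$, at least one of $(i-1,j)$ or $(i,j-1)$ lies in $P$}.

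To prove the Lemma by contradiction, I would suppose both cells are absent; convexity then forces $l_i=j>l$ and $b_j=i>b$ (where $b_j$ is the bottom of column $j$), whence $(b,j),(i,l)\notin P$, $r_b<j$, and $t_l<i$. Letting $k^{*}\in[b,i]$ be minimal with $l_{k^{*}}\geq j$, the consecutive rows $k^{*}-1$ and $k^{*}$ must share a column, which must be $\geq l_{k^{*}}\geq j$; this forces $(k^{*}-1,j)\in P$, contradicting $b_j=i>k^{*}-1$. This Lemma is the main obstacle I anticipate: the remaining steps reduce to short arguments about N/E-paths and one-shot connectivity, but the Lemma requires iterating the ``consecutive rows share a column'' observation together with careful bookkeeping of the intervals $[l_k,r_k]$ through the transition zone determined by $k^{*}$.
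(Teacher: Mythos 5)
Your plan is correct, but it follows a genuinely different route from the paper for the key direction. The paper, after invoking Proposition~\ref{prop:m-basis_convex} exactly as you do, proves that a convex polyomino avoiding $D$ is directed by taking as candidate source the bottom cell of the leftmost column and arguing by contradiction on a \emph{minimal-length} path to a non-reachable cell: a case analysis on the first vertical step of that path either disconnects a row (contradicting $h$-convexity) or produces an occurrence of $D$ just below the source; the converse direction is likewise argued via the position of the source relative to the occurrence of $D$ and the possible shapes of a north/east path to its upper-left cell. You instead interpolate the auxiliary characterization ``the bottom-left corner $(b,l)$ of the bounding box belongs to $P$'': avoidance of $D$ forces $(b,l)\in P$ via the strictly decreasing step in the sequence of leftmost columns, and then your local Lemma (every cell other than $(b,l)$ has a cell below or to its left), proved from convexity, connectivity and $(b,l)\in P$ alone, yields directedness constructively by induction on $i+j$; your first direction replaces the paper's case analysis by following an N/E path from the source through row $i_1$ and using $h$-/$v$-convexity. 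I checked your Lemma argument (the choice of $k^{*}$, the shared-column fact for consecutive rows, and the contradiction with $b_j=i$) and it is sound; note only that the inequalities $j>l$ and $i>b$ are not forced by convexity alone but by convexity together with $(b,l)\in P$ (the boundary cases $i=b$ or $j=l$ are immediate from convexity and the corner), and that the facts $r_b<j$ and $t_l<i$ are never used. What your route buys is an explicit algorithmic construction of the N/E paths and the intermediate equivalence (for convex $P$) between avoiding $D$, containing the corner of the bounding box, and being directed, which is a reusable structural statement; the paper's proof is shorter but relies on a minimality-of-path argument whose case analysis is left rather terse.
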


\begin{proof}
Let us prove that ${\cal D}=\Avp (H,V,D)$.

First, let $P\in \Avp (H,V,D)$. 
By Proposition~\ref{prop:m-basis_convex}, $P$ is a convex polyomino which avoids the submatrix $D$. 
Let $s$ be the bottom cell of the leftmost column of $P$. 
If $P$ is directed, then necessarily $s$ is its source.
Let us proceed by contradiction assuming that $P$ is not directed. 
Then there exists a cell $c$ of $P$ such that all the paths from $s$ to $c$ contain either a south step or a west step. 
Consider one of these paths $p$ with minimal length (where the length of a path is just the number of its cells), say $\ell$, 
and having at least a south step (if $p$ has at least a west step, a similar reasoning holds). 
Let us denote by $i$, with $1\leq i < \ell$ the index such that the pair of cells $(c_i,c_{i+1})$ forms the first south step in $p$. 
Let us consider two cases.
\begin{description}
\item{i.)} If the first vertical step of $p$ is a north step, 
then let $(c_j,c_{j+1})$ be the last north step before the first south step of $p$. 
By minimality of $p$, it holds that   
$c_{j+1}$ and $c_i$ are two distinct cells on the same row, that do not share an edge. 
Consequently, $c_j$ and $c_{i+1}$ are also distinct cells on the same row that do not share an edge. 
And by minimality of $p$, this row is not connected, contradicting the $h$-convexity of $P$. 
\item{ii.)} If the first vertical step of $p$ is a south step, \ie 
if $(c_i,c_{i+1})$ is the first vertical step of $p$, 
it means that $s$ and $c_i$ are on the same row of $P$. 
The cell $q$ immediately below $s$ is then on the row of $c_{i+1}$, 
and by definition of $s$, it does not belong to $P$. 
Therefore, the four cells $s$, $q$, $c_i$, and $c_{i+1}$ form the pattern $D$, a contradiction.
\end{description}

Conversely, let $P$ be a directed convex polyomino. 
By Proposition~\ref{prop:m-basis_convex}, $P$ avoids $H$ and $V$. 
We will proceed by contradiction assuming that $P$ contains the pattern $D$. 
Let $c_1$ and $c_2$ be the cells of $P$ that correspond to the upper left and to the lower right cells of $D$, respectively. 
The source cell $s$ of $P$ must lie weakly below (resp. weakly to the left of) $c_2$, 
or every path from $s$ to $c_2$ would have to contain at least one south (resp. west) step, contradicting the fact that $P$ is directed. 
Now consider any path leading from $s$ to $c_1$. 
Either it runs entirely on the left of $c_1$ (in the weak sense), so that $P$ contains the pattern $H$ or $V$, 
which is against the convexity of $P$. 
Or it contains a west step, which again contradicts the fact that $P$ is directed.
\end{proof}

\paragraph{Parallelogram polyominoes.}
Another widely studied family of polyominoes --that can also be defined using a notion of path, this time of {\em boundary path}-- is that of {\em parallelogram polyominoes} (see Figure~\ref{fig:polyominoes}$(d)$).

\begin{definition} A parallelogram polyomino is a polyomino whose boundary can be decomposed in two paths, the upper and the lower paths, which are made of north and east unit steps and meet only at their starting and final points.
\end{definition}

Proposition~\ref{prop:parallelogram} below shows that parallelogram polyominoes can be described by submatrix avoidance, 
hence form a polyomino class. 
Since the proof of Proposition~\ref{prop:parallelogram} resembles that of Proposition~\ref{prop:m-basis_directed_convex}, it will be left to the reader.

\begin{proposition}\label{prop:parallelogram}
Parallelogram polyominoes are characterized by the avoidance of the submatrices 
$ {\footnotesize
   \left[\begin{array}{cc}
          1 & 0\\
          1 & 1
         \end{array}
  \right]}$ and ${\footnotesize
  \left[\begin{array}{cc}
          1 & 1\\
          0 & 1
         \end{array}
  \right]}$. 
\label{prop:m-basis_parall}
\end{proposition}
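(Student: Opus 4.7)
The plan is to prove the two inclusions in the manner of the proof of Proposition~\ref{prop:m-basis_directed_convex}. Write $N_1 = {\footnotesize\left[\begin{array}{cc} 1 & 0 \\ 1 & 1 \end{array}\right]}$ and $N_2 = {\footnotesize\left[\begin{array}{cc} 1 & 1 \\ 0 & 1 \end{array}\right]}$, and for any polyomino and any non-empty column $j$ denote by $b_j$ (resp.\ $t_j$) the row index of its bottommost (resp.\ topmost) cell in that column.

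For the forward inclusion, I will use that in a parallelogram polyomino $P$ the lower and upper boundary paths being monotone (made of north and east steps) is equivalent to $P$ being $v$-convex with $(b_j)$ and $(t_j)$ both weakly increasing in $j$. From this, an occurrence of $N_2$ on rows $i_1 < i_2$ and columns $j_1 < j_2$ would force $b_{j_2} \leq i_1 < b_{j_1}$ (the first inequality from $(i_1, j_2) = 1$, the second from $v$-convexity together with $(i_1, j_1) = 0$ and $(i_2, j_1) = 1$), contradicting the monotonicity of $(b_j)$; the avoidance of $N_1$ is symmetric via $(t_j)$.

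For the converse, suppose $P$ avoids $N_1$ and $N_2$; I will establish three facts: (a) $(b_j)$ is weakly increasing, (b) $(t_j)$ is weakly increasing, and (c) every column of $P$ is connected. Once (a)--(c) are established, each row of $P$ is automatically a contiguous interval of columns (as $\{j : b_j \leq r\}$ and $\{j : t_j \geq r\}$ are each intervals by monotonicity), and the connectedness of $P$ together with (a)--(c) implies that the lower (resp.\ upper) boundary of $P$ traces a monotone path of north and east steps, the two meeting only at the SW and NE corners of the bounding box, so $P$ is a parallelogram polyomino. Proving (a) is quick: if $b_{j+1} < b_j$ for some adjacent columns in the support of $P$, then by connectedness of $P$ there is a row $r \geq b_j$ with $(r, j) = (r, j+1) = 1$, and combining this with $(b_{j+1}, j) = 0$ and $(b_{j+1}, j+1) = 1$ gives an occurrence of $N_2$. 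Fact (b) is dual via $N_1$.

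The main obstacle is (c). Suppose for contradiction that column $j$ has a minimal gap $(r-1, j) = 1$, $(r, j) = 0$, $(r+1, j) = 1$. Since $P$ is connected, the cells $(r-1, j)$ and $(r+1, j)$ are linked by a path in $P$; its first visit to row $r$ is at some column $k_1 \neq j$ coming from $(r-1, k_1)$, and its last visit is at some column $k_2 \neq j$ exiting via $(r+1, k_2)$, so $(r-1, k_1) = (r, k_1) = 1$ and $(r, k_2) = (r+1, k_2) = 1$. If $k_1 < j$, then rows $r-1, r$ and columns $k_1, j$ form $N_1$; if $k_2 > j$, then rows $r, r+1$ and columns $j, k_2$ form $N_2$. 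In the remaining case $k_1 > j$ and $k_2 < j$, I split on the value of $(r+1, k_1)$: if it equals $1$, then rows $r, r+1$ and columns $j, k_1$ form $N_2$; if $0$, then rows $r, r+1$ and columns $k_2, k_1$ form $N_1$ (using $(r+1, k_2) = 1$). In every case we contradict the avoidance hypothesis, completing the proof.
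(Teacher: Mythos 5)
The paper never writes out a proof of this proposition --- it is explicitly left to the reader, with the hint that it resembles the path-based argument for Proposition~\ref{prop:m-basis_directed_convex} --- so your argument has to stand on its own. Most of it does. The forward direction is correct: a parallelogram polyomino is $v$-convex with $(b_j)$ and $(t_j)$ weakly increasing, and your two-line derivations of a contradiction from an occurrence of $N_2$ (resp.\ $N_1$) are sound with the paper's bottom-to-top row convention. Facts (a) and (b) are also correctly derived (the horizontal adjacency between consecutive columns does follow from connectedness, since the column index along a cell path changes by at most one per step). Your matrix-level route via the monotonicity of column bottoms and tops is a legitimate, arguably cleaner, alternative to the source-and-paths style of argument the paper suggests imitating.

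There is, however, a genuine hole in step (c): you assume that a disconnected column contains a gap of height exactly one, i.e.\ three consecutive rows reading $1,0,1$. Disconnection only gives rows $p<q$ with $(p,j)=(q,j)=1$, $q\ge p+2$, and $(i,j)=0$ for all $p<i<q$; when $q>p+2$ no row $r$ with $(r-1,j)=1$, $(r,j)=0$, $(r+1,j)=1$ exists, and your case analysis genuinely uses both flanking $1$'s (the case $k_1<j$ needs $(r-1,j)=1$; the cases $k_2>j$ and $(r+1,k_1)=1$ need $(r+1,j)=1$), so gaps of height at least two are not covered. The repair is in the same spirit: let the path from $(p,j)$ to $(q,j)$ first enter row $p+1$ from $(p,k_1)$, so that $(p,k_1)=(p+1,k_1)=1$ and $k_1\neq j$. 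If $k_1<j$, rows $p,p+1$ and columns $k_1,j$ form $N_1$. If $k_1>j$, inspect the entry $(q,k_1)$: if it equals $1$, rows $p+1,q$ and columns $j,k_1$ form $N_2$; if it equals $0$, rows $p,q$ and columns $j,k_1$ form $N_1$ (so the column $k_2$ is not even needed). With this correction the proof of (c) is complete; in the final wrap-up it would also be worth stating explicitly that connectedness forces $b_{j+1}\le t_j$ for all $j$, since this is exactly what guarantees that the two monotone boundary paths meet only at the SW and NE corners.
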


\begin{remark}
The set of two excluded submatrices of Proposition~\ref{prop:m-basis_parall} 
is also the $p$-basis of the class of parallelogram polyominoes. 
And it can further be checked that it forms a minimal $m$-basis. 
Therefore, by Proposition~\ref{prop:when_p-basis_is_minimal}, 
it is the unique minimal $m$-basis of the class of parallelogram polyominoes. 
\end{remark}

\paragraph{$L$-convex polyominoes.}
Parallelogram polyominoes and directed-convex polyominoes form subclasses of the class of convex polyominoes 
that are both defined in terms of paths. 
The relationship between paths and convexity is closer than it may appear. 
In \cite{CR} the authors observed that convex polyominoes have the property that every pair of cells
is connected by a monotone path, and proposed a classification of convex polyominoes based on
the number of changes of direction in the paths connecting any two cells of a polyomino. More
precisely, a convex polyomino is {\itshape $k$-convex} if every pair of its cells can be connected by a monotone path
with at most $k$ changes of direction, and $k$ is called the {\itshape convexity degree} of the polyomino.

The $1$-convex polyominoes are more commonly called $L$-convex polyominoes: any two cells can be connected by a path with at most one change of direction (see Figure~\ref{fig:polyominoes}$(e)$). In recent literature $L$-convex polyominoes have been considered from several points of view: under tomographical aspects~\cite{CFRR} and from the enumeration perspective~\cite{lconv,CFRR2}.

Here, we study how the constraint of being $k$-convex can be represented in terms of submatrix avoidance.
In order to reach this goal, let us present some basic definitions and properties from the field of {\em discrete tomography} \cite{ryser}. Given a binary matrix, the vector of its {\em horizontal} (resp. {\em vertical}) {\em projections} is the vector of the row (resp. column) sums of its elements. 
In 1963 Ryser~\cite{ryser} established a fundamental result which, using our notation, can be reformulated as follows:

\begin{thm}\label{th:ryser}
A binary matrix is uniquely determined by its horizontal and vertical projections if and only if it does not contain $S_1= {\footnotesize \left[\begin{array}{cc}
          1 & 0\\
          0 & 1
         \end{array}
  \right]}$ and $S_2 = {\footnotesize \left[\begin{array}{cc}
          0 & 1\\
          1 & 0
         \end{array}
  \right]}$ as submatrices.
\end{thm}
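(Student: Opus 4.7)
The plan is to prove each direction by contrapositive. For the easier implication (unique determination implies avoidance of $S_1$ and $S_2$), an interchange argument suffices: if $M$ contains $S_1$ at rows $i_1 < i_2$ and columns $j_1 < j_2$, swapping the four entries at these positions turns the occurrence of $S_1$ into one of $S_2$ and yields a new matrix $M' \neq M$; each of the two affected rows and each of the two affected columns still contains exactly one $1$ among the swapped entries, so every row and column sum is preserved, and $M$ is not uniquely determined by its projections. The case of $S_2$ is symmetric.

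For the harder direction (avoidance of $S_1$ and $S_2$ implies unique determination), I again work by contrapositive: from two distinct matrices $M \neq M'$ with identical row and column sums, I would exhibit a $2 \times 2$ permutation submatrix of $M$, which is necessarily $S_1$ or $S_2$. Setting $D^+ = \{(i,j) : M_{i,j} = 1, M'_{i,j} = 0\}$ and $D^- = \{(i,j) : M_{i,j} = 0, M'_{i,j} = 1\}$, both sets are non-empty, and equality of row (resp.\ column) sums forces each row (resp.\ column) to contain equally many $D^+$- and $D^-$-positions. Starting from any element of $D^+$ and moving alternately along a row to a $D^-$-position, then along a column to a $D^+$-position, and so on, builds an alternating cycle; I pick one of minimal length $2k$. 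It visits $k$ distinct rows $i_1, \dots, i_k$ and $k$ distinct columns $j_1, \dots, j_k$ of $M$ such that, setting $i_{k+1} = i_1$ and $j_{k+1} = j_1$, we have $(i_p, j_p) \in D^+$ and $(i_p, j_{p+1}) \in D^-$ for every $p$.

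The hard part will be extracting the desired $S_1$ or $S_2$ pattern when $k > 2$, since a naive attempt to find a length-$4$ alternating cycle in $D$ can fail. My proposed remedy is a cyclic induction on the full $k \times k$ restriction $A$ of $M$ to these rows and columns, re-indexed in cycle order so that $A_{p,q} = M_{i_p, j_q}$ (indices read cyclically modulo $k$); this gives $A_{p,p} = 1$ and $A_{p, p+1} = 0$ for every $p$. Assuming $M$ (and hence $A$) contains no $2 \times 2$ permutation submatrix---a property invariant under row and column reordering, so that using cycle order rather than $M$-order causes no trouble---I would prove by induction on $m = 1, \dots, k-1$ that $A_{p, p+m} = 0$ for every $p$: in the inductive step, the $2 \times 2$ sub-configuration at rows $p, p+m$ and columns $p+m, p+m+1$ reads $\bigl(\begin{smallmatrix} 0 & ? \\ 1 & 0 \end{smallmatrix}\bigr)$ with $? = A_{p, p+m+1}$, and setting $? = 1$ would produce a permutation submatrix, so the hypothesis forces $? = 0$. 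Pushing the induction up to $m = k-1$ then yields $A_{p,p} = 0$, contradicting $A_{p,p} = 1$ and completing the proof.
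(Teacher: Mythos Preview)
The paper does not give its own proof of this theorem: it is quoted as a classical result of Ryser and used as a black box. So there is nothing in the paper to compare your argument against.

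That said, your proof is correct. The interchange argument for the easy direction is the standard one. For the converse, your alternating-cycle construction is the usual way to extract structure from two matrices with equal margins, and your cyclic induction on the $k\times k$ restriction $A$ is a clean way to force a $2\times 2$ permutation submatrix. One small wording issue: you say you prove $A_{p,p+m}=0$ for $m=1,\dots,k-1$ and then that ``pushing the induction up to $m=k-1$ yields $A_{p,p}=0$''. As stated, the range $m=1,\dots,k-1$ only gives $A_{p,p+(k-1)}=0$; what you actually want is to apply the inductive step once more at $m=k-1$ (which is legitimate, since $p\neq p+(k-1)\bmod k$ and $p+(k-1)\neq p+k\bmod k$ when $k\geq 2$) to conclude $A_{p,p+k}=A_{p,p}=0$. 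This is exactly what your $2\times 2$ configuration delivers, so the argument goes through---just tighten the indexing when you write it up.
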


Now we show that the set $\cal L$ of $L$-convex polyominoes is a polyomino class. 

\begin{proposition}\label{prop:lconvex}
$L$-convex polyominoes are characterized by the avoidance of the submatrices $H,V,S_1$ and $S_2$. 
In other words, ${\cal L}=\Avp(H,V,S_1,S_2)$.
\label{prop:m-basis_lconvex}
\end{proposition}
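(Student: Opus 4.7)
The plan is to prove the equality ${\cal L}=\Avp(H,V,S_1,S_2)$ by double inclusion, using Proposition~\ref{prop:m-basis_convex} as the starting point in both directions. Theorem~\ref{th:ryser} will not be invoked directly; it merely motivates why $S_1$ and $S_2$ are the natural obstructions to single out among submatrices of binary matrices.

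For the inclusion ${\cal L}\subseteq\Avp(H,V,S_1,S_2)$, every $L$-convex polyomino is convex and so avoids $H$ and $V$ by Proposition~\ref{prop:m-basis_convex}. To show avoidance of $S_1$, I argue by contraposition. Suppose $P$ contains $S_1$: there exist rows $r_1<r_2$ and columns $c_1<c_2$ with $(r_2,c_1),(r_1,c_2)\in P$ but $(r_2,c_2),(r_1,c_1)\notin P$. Any monotone path between the two filled cells uses only east and south steps, and with at most one change of direction it must turn at $(r_2,c_2)$ or at $(r_1,c_1)$, both of which are empty. Hence no such path lies inside $P$, and $P$ is not $L$-convex. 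The case of $S_2$ is treated identically, swapping the two corner cells.

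For the reverse inclusion, let $P\in\Avp(H,V,S_1,S_2)$; by Proposition~\ref{prop:m-basis_convex} $P$ is convex, so every row and every column of $P$ is connected. Given two cells $a=(r_a,c_a)$ and $b=(r_b,c_b)$ of $P$, I exhibit a monotone path from $a$ to $b$ with at most one turn. If $r_a=r_b$ or $c_a=c_b$, $h$-convexity (respectively $v$-convexity) yields a straight path. Otherwise, up to relabeling assume $r_a>r_b$; the subcase $c_a>c_b$ is handled symmetrically using $S_2$, so assume $c_a<c_b$. Set $c'=(r_a,c_b)$ and $c''=(r_b,c_a)$. If $c'\in P$, then $h$-convexity of row $r_a$ together with $v$-convexity of column $c_b$ ensure that the L-shaped path $a\to c'\to b$ lies entirely inside $P$; the same conclusion holds via $c''$ in the other case. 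If neither $c'$ nor $c''$ is in $P$, then the four cells $(r_a,c_a),(r_a,c_b),(r_b,c_a),(r_b,c_b)$ form an occurrence of $S_1$ in $P$, contradicting the hypothesis.

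I do not anticipate a real obstacle: once convexity is extracted from Proposition~\ref{prop:m-basis_convex}, the heart of the argument is the elementary case analysis above, which formalizes the geometric intuition that a one-turn L-path between two cells must turn at one of the two corners of their bounding rectangle, and that $S_1$ and $S_2$ are exactly the configurations that rule out both corners simultaneously.
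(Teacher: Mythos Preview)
Your proof is correct, and for the inclusion $\Avp(H,V,S_1,S_2)\subseteq{\cal L}$ it is essentially the same argument as the paper's (both reduce to the observation that, by convexity, an $L$-path between two cells exists if and only if at least one of the two corners of their bounding rectangle is filled; the paper phrases this by walking along the row and column from $c_1$, but the content is identical).

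The genuine difference lies in the inclusion ${\cal L}\subseteq\Avp(H,V,S_1,S_2)$. The paper does \emph{not} argue directly: it cites a result from~\cite{CFRR} stating that $L$-convex polyominoes are uniquely determined by their horizontal and vertical projections, and then invokes Ryser's Theorem~\ref{th:ryser} to conclude that $S_1$ and $S_2$ are avoided. Your contrapositive argument --- that an occurrence of $S_1$ (or $S_2$) exhibits two cells whose only candidate $L$-paths turn at the two empty corners --- is more elementary and fully self-contained, removing the dependence on the external tomographic result. What the paper's route buys is a connection to the tomographic characterization of $L$-convex polyominoes, which motivates the later study of $\Avp(S_1,S_2)$; what your route buys is a cleaner, standalone proof of the proposition itself.
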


\begin{proof}
First, let $P \in \Avp(H,V,S_1,S_2)$. 
By Proposition~\ref{prop:m-basis_convex}, $P$ is a convex polyomino that avoids the submatrices $S_1$ and $S_2$. 
Let us proceed by contradiction assuming that $P$ is not $L$-convex. 
It means that there exists a pair of cells $c_1$ and $c_2$ of $P$, 
that are neither horizontally nor vertically aligned, 
and such that all the paths from $c_1$ to $c_2$ are not $L$-paths. 
In other words, all paths from $c_1$ to $c_2$ have at least two changes of directions. 
Up to symmetry, we may assume that $c_2$ lies below $c_1$ and on its right. 
Consider the path from $c_1$ that goes always right (resp. down) until it encounters a cell $c'$ (resp. $c''$) which does not belong to $P$. 
This happens before reaching the column (resp. row) of $c_2$, otherwise by convexity there would be a path from $c_1$ to $c_2$ with only one change of direction. 
By convexity, all the cells on the right of $c'$ in the same row (resp. below $c''$ in the same column) do not belong to $P$. 
Consequently, the $2\times 2$-submatrix of $P$ corresponding to the two rows and two columns of $c_1$ and $c_2$ is an occurrence of the pattern $S_1$, giving the desired contradiction.

Conversely, let $P$ be an $L$-convex polyomino. 
By convexity, $P$ does not contain the submatrices $H$ and $V$. 
Moreover, in \cite{CFRR} it is proved that an $L$-convex polyomino is uniquely determined by its horizontal and vertical projections, 
so by Theorem~\ref{th:ryser} it cannot contain $S_1$ nor $S_2$.  
\end{proof}


\paragraph{$2$-convex polyominoes.}

Unlike $L$-convex polyominoes, 
$2$-convex polyominoes do not form a polyomino class. Indeed, 
 the $2$-convex polyomino in Figure~\ref{fig:2pc3}$(a)$ contains the
$3$-but-not-$2$-convex polyomino $(b)$ as a submatrix. 
Similarly, the set of $k$-convex polyominoes is not a polyomino class, for $k \geq 2$.

\begin{figure}[htp]
\begin{center}
\includegraphics[width=7cm]{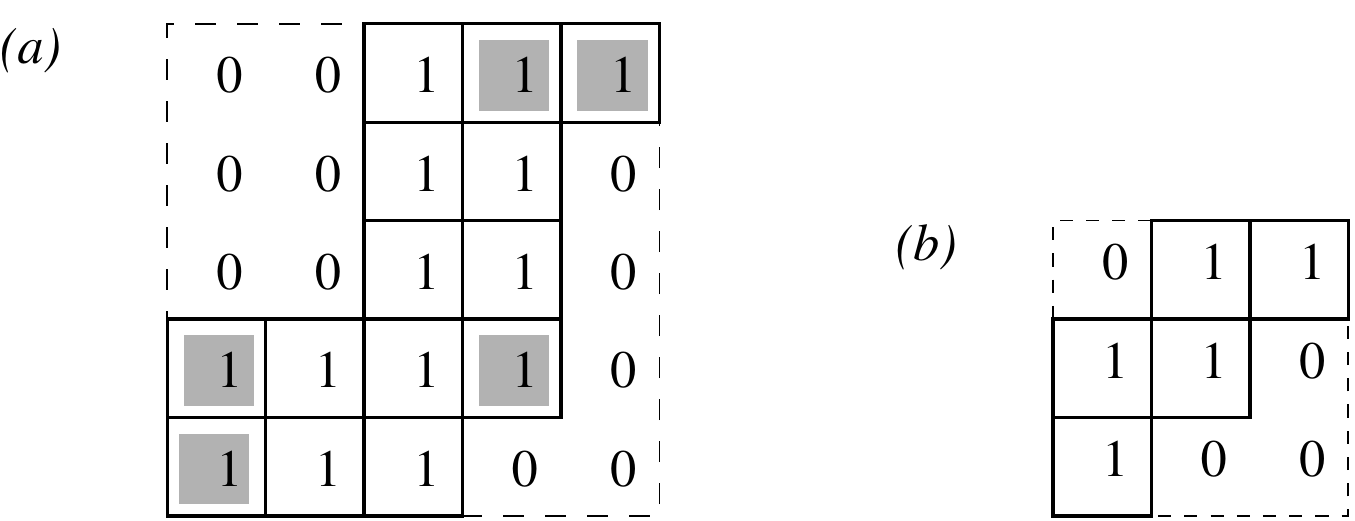}
\caption{$(a)$ a $2$-convex polyomino $P$; $(b)$ a submatrix of $P$
that is a polyomino of convexity degree $3$.} \label{fig:2pc3}
\end{center}
\end{figure}

As already pointed out, there are several other families of polyominoes that
are not polyomino classes, like directed polyominoes or polyominoes without holes. 
Consequently, these families cannot be expressed in terms of submatrix avoidance. 
This suggests to extend the notion of submatrix avoidance, in order to be able to represent more families of polyominoes. 
{\em Generalized submatrix avoidance} would impose adjacency constraints between rows or columns, 
in the same fashion as the avoidance of vincular and bivincular patterns~\cite{bona} in permutations. 
For instance, $2$-convex polyominoes can be characterized using such generalized excluded submatrices, 
although we omit the proof here, since we believe it goes beyond the aims of this paper.
We refer the reader to~\cite[\S 3.6.2]{TesiDaniela} for more details about generalized excluded submatrices. 

\subsection{Defining new polyomino classes by submatrix avoidance}

In addition to characterizing known classes, the approach of submatrix avoidance may be used to define new classes of polyominoes, the main question being then to give a combinatorial/geometrical characterization of these classes.  We present some examples of such classes, with simple characterizations and interesting combinatorial properties. These examples illustrate that the submatrix avoidance approach in the study of families of polyominoes is promising.

\paragraph{$L$-polyominoes.} Proposition~\ref{prop:lconvex}
states that $L$-convex polyominoes can be characterized
by the avoidance of four matrices: $H$ and $V$, which impose the 
convexity constraint; and $S_1$ and $S_2$, which account for the
$L$-property, or equivalently (by Theorem~\ref{th:ryser}) indicate 
the uniqueness of the polyomino w.r.t its horizontal and vertical
projections. So, it is quite natural to study the
class $\Avp(S_1,S_2)$, which we call the class of {\em $L$-polyominoes}. 
From Theorem~\ref{th:ryser}, it follows that: 

\begin{proposition}
Every $L$-polyomino is uniquely determined by its horizontal and vertical projections.
\end{proposition}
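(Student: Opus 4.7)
The proof plan is extremely short, because the statement is essentially an immediate corollary of Ryser's Theorem (Theorem~\ref{th:ryser}) combined with the very definition of $L$-polyominoes.

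First I would unfold the definition: an $L$-polyomino is, by construction, an element of $\Avp(S_1,S_2)$, which means that the binary matrix representing it avoids both $S_1$ and $S_2$ as submatrices. Viewing the polyomino simply as a binary matrix, Theorem~\ref{th:ryser} applies directly: it guarantees that a binary matrix avoiding $S_1$ and $S_2$ is the unique binary matrix with its horizontal and vertical projections. Hence, given an $L$-polyomino $P$ with projection vectors $(r_1,\dots,r_m)$ and $(c_1,\dots,c_n)$, there is no other binary matrix of the same dimensions sharing these projections, and in particular no other polyomino.

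The only detail worth spelling out is that the dimensions $m \times n$ of the bounding box are themselves encoded by the projection vectors (they are just their lengths), so uniqueness among binary matrices of those dimensions is indeed uniqueness among all polyominoes representable with those projections. Once that is noted, the conclusion follows with no further work.

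I expect no real obstacle in this proof: the entire content lies in Ryser's theorem, which is cited, and the rest is definitional. The only place where one must be slightly careful is to make sure the comparison of projections is done with compatible matrix dimensions, as just discussed; this is immediate since the first and last rows and columns of a polyomino matrix contain at least one~$1$, so that empty rows or columns cannot be silently appended or removed.
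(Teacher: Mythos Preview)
Your proposal is correct and matches the paper's approach exactly: the paper presents this proposition immediately after the definition of $L$-polyominoes, prefaced only by ``From Theorem~\ref{th:ryser}, it follows that:'', so it too treats the statement as a direct corollary of Ryser's theorem applied to the defining avoidance of $S_1$ and $S_2$. Your additional remark about the bounding-box dimensions being recoverable from the projection vectors is a reasonable bit of extra care, but the paper does not spell it out and it is not strictly needed.
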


From a geometrical point of view, the $L$-polyominoes can be
characterized using the concept of (geometrical) inclusion between rows (resp.
columns) of a polyomino. 
For any polyomino $P$ with $n$ columns, and any rows $r_1=(r_{1;1} \ldots r_{1;n})$, $r_2=(r_{2;1} \ldots r_{2;n})$ of the matrix representing $P$, we say that $r_1$ is {\em geometrically included} in $r_2$ (denoted $r_1 \leqslant r_2$) if, for all $1 \leq i \leq n$ we have that $r_{1;i}=1$ implies $r_{2;i}=1$. Geometric inclusion of columns is defined analogously. 
Two rows (resp. columns) $r_1, r_2$ (resp. $c_1, c_2$) of a polyomino $P$ are said to be {\em comparable} if $r_1\leqslant r_2$ or $r_2\leqslant r_1$ (resp. $c_1\leqslant c_2$ or $c_2\leqslant c_1$). These definitions are illustrated in Figure~\ref{bicentered}. 

The avoidance of $S_1$ and $S_2$ has an immediate interpretation in geometric terms, proving that: 

\begin{proposition}
The class of $L$-polyominoes coincides with the set of the
polyominoes where every pair of rows (resp. columns) are comparable.
\end{proposition}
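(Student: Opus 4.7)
The plan is to prove the equivalence directly by translating the incomparability of two rows (or two columns) of a polyomino matrix into the presence of $S_1$ or $S_2$ as a submatrix, and conversely. Since the roles of rows and columns are interchangeable in the definitions and in the shape of $S_1, S_2$, I would treat the row case first and remark that the column case is identical, thereby also establishing the parenthetical assertion in the statement.

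First I would fix a polyomino $P$ and consider two distinct rows $r_1, r_2$ of its matrix representation. Unpacking the definition of $\leqslant$, the rows $r_1$ and $r_2$ are incomparable precisely when $r_1 \not\leqslant r_2$ and $r_2 \not\leqslant r_1$, which means there exist column indices $i, j$ (necessarily distinct) such that $r_{1,i}=1, r_{2,i}=0$ and $r_{1,j}=0, r_{2,j}=1$. The $2\times 2$ submatrix of $P$ obtained by selecting the rows $r_1, r_2$ (in their original vertical order) and the columns $i, j$ (in their original horizontal order) then has exactly one $1$ on each row and each column, placed in an anti-diagonal configuration; depending on whether $r_1$ lies above or below $r_2$ and whether $i<j$ or $j<i$, this submatrix is either $S_1$ or $S_2$. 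Thus any incomparable pair of rows yields an occurrence of $S_1$ or $S_2$ in $P$.

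Conversely, if $P$ contains $S_1$ or $S_2$ as a submatrix, pick the two rows and two columns involved in the occurrence: on these two rows, one column has a $1$ only in the upper row and the other column has a $1$ only in the lower row, so neither row is $\leqslant$ the other, witnessing incomparability. Putting the two directions together gives the equivalence $P \in \Avp(S_1,S_2) \iff$ every pair of rows of $P$ is comparable. Running the same argument with the roles of rows and columns exchanged (and noting that $S_1$ and $S_2$ are invariant under transposition up to swap) yields the analogous equivalence for columns, and hence all three conditions are equivalent, proving the stated proposition.

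The argument amounts to unpacking definitions and checking a small case analysis for the orientation of the offending $2\times 2$ submatrix, so there is no substantive obstacle; the only mild care needed is to be explicit that the column ordering in the submatrix matches the column ordering inherited from $P$, so that the resulting pattern is genuinely $S_1$ or $S_2$ (rather than some relabeled version).
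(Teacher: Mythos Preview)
Your proof is correct and is precisely the ``immediate interpretation in geometric terms'' that the paper alludes to without writing out; the paper does not give a detailed argument here, so your unpacking of the definitions---incomparability of two rows (or columns) being equivalent to the existence of a $2\times 2$ submatrix equal to $S_1$ or $S_2$---is exactly the intended reasoning. A minor remark: $S_1$ and $S_2$ are each equal to their own transpose, so the column case follows even more directly than ``up to swap''.
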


\begin{figure}[htd]
\begin{center}
\includegraphics[width=9cm]{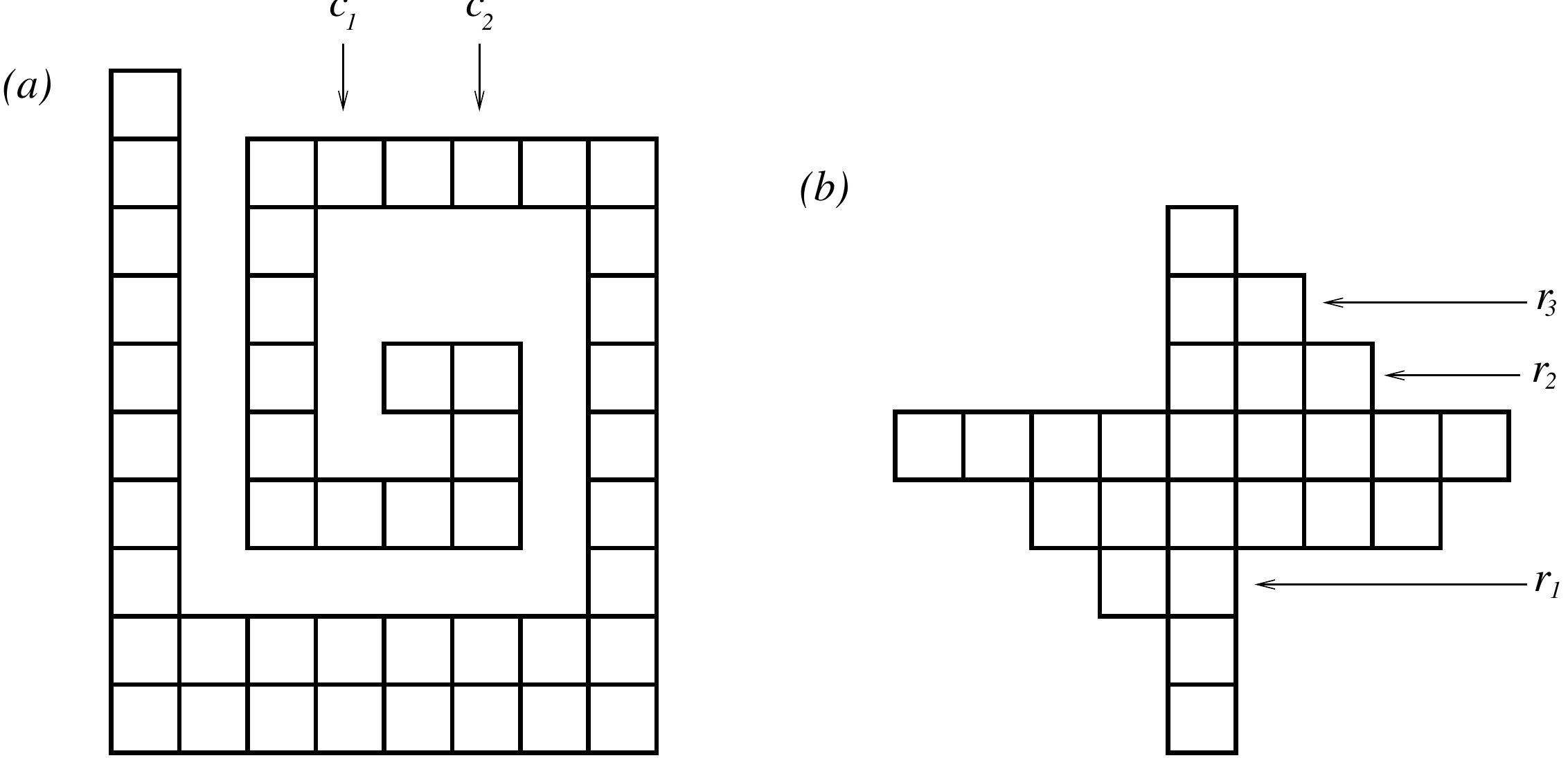}
\caption{$(a)$ a $L$-polyomino, where the reader can check that
every pair of  rows and columns are comparable; for instance,
$c_1\leqslant c_2$; $(b)$ an example of a polyomino which is not an
$L$-polyomino, where row $r_1$ is comparable 
neither to row $r_2$ nor to row $r_3$.} \label{bicentered}
\end{center}
\end{figure}

We leave open the problem of studying further the class of
$L$-polyominoes, in particular from an enumerative point of view 
(enumeration w.r.t. the area or the semi-perimeter).

\paragraph*{ The class $\Avp(H', V') $ with $H' = {\footnotesize \left[\begin{array}{c}
          0 \\
          1 \\
          0 
         \end{array}
  \right] }$ and $V' = {\footnotesize\, \left[\begin{array}{ccc}
          0  & 1 & 0 
         \end{array}
  \right] }$.} ~ \\
  
By analogy\footnote{which essentially consists in exchanging $0$ and $1$ in the excluded submatrices} with the class of convex polyominoes (characterized by the avoidance of $H$ and $V$), we may consider the class 
${\cal C}'$ of polyominoes avoiding the two submatrices $H'$ and $V'$ defined above. 
In the sense of the $0/1$ duality, these objects can be viewed as a dual class to convex polyominoes. Figure \ref{fig:010}$(a)$ shows a polyomino in ${\cal C}'$.

\begin{figure}[htd]
\begin{center}
\includegraphics[width=9cm]{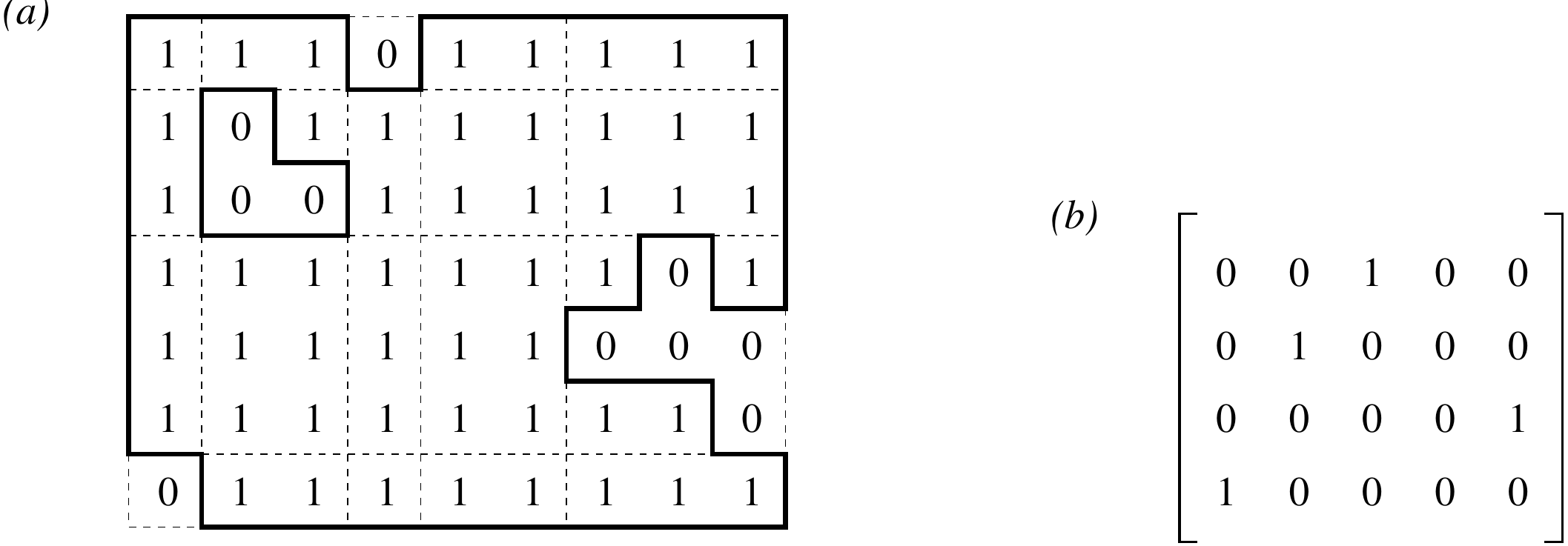}
\caption{$(a)$ a polyomino in ${\cal C}'$ and its decomposition; $(b)$ the corresponding quasi-permutation matrix.}
\label{fig:010}
\end{center}
\end{figure}

The avoidance of $H'$ and $V'$ has a straightforward geometric interpretation, giving immediately that: 

\begin{proposition}
A polyomino $P$ belongs to ${\cal C}'$ if and only if every connected set of cells of maximal length in a row (resp. column) has a contact with the minimal bounding rectangle of $P$. 
\label{prop:contact}
\end{proposition}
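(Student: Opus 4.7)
The plan is to establish the equivalence by unpacking directly what it means for $P$ to contain $V'$ (resp.\ $H'$) as a submatrix, and then matching this with the geometric \emph{contact} condition on maximal runs. Containing $V' = {\footnotesize\left[\begin{array}{ccc} 0 & 1 & 0 \end{array}\right]}$ means that there exist a row $r$ and three columns $c_1 < c_2 < c_3$ of $P$ such that $M(r,c_1) = 0$, $M(r,c_2) = 1$, $M(r,c_3) = 0$; similarly for $H'$ with three rows in a common column. I would also fix once and for all the intended reading of ``contact with the bounding rectangle'': a maximal horizontal run of cells occupying columns $a$ through $b$ of row $r$ has contact with the bounding rectangle iff $a = \ell$ or $b = R$, where $\ell$ and $R$ denote respectively the leftmost and rightmost columns of the bounding rectangle of $P$. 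The vertical case is symmetric. Since the characterization is a conjunction over the two directions of runs, it suffices to treat $V'$ and horizontal runs; $H'$ and vertical runs are handled identically.

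For the implication $(\Rightarrow)$, I would argue by contrapositive: suppose that some maximal horizontal run in row $r$ spans columns $[a,b]$ with $a > \ell$ and $b < R$. Then the cells at positions $(r, a-1)$ and $(r, b+1)$ lie inside the bounding rectangle, and by maximality of the run their entries equal $0$. Taking $c_1 = a-1$, $c_2 = a$, and $c_3 = b+1$ yields an occurrence of $V'$ in the matrix of $P$, so $P \notin \Avp(V')$, hence $P \notin {\cal C}'$.

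For the converse $(\Leftarrow)$, suppose that $P$ contains $V'$ and fix an occurrence at columns $c_1 < c_2 < c_3$ in some row $r$. The $1$-entry at $(r,c_2)$ belongs to some maximal horizontal run $[a,b]$ of row $r$, so $a \leq c_2 \leq b$. From $M(r,c_1)=0$ and $c_1<c_2$ we deduce $c_1 < a$, and since $c_1 \geq \ell$ this gives $a > \ell$. Symmetrically $b < c_3 \leq R$, so $b < R$. Hence the maximal run $[a,b]$ does not touch the leftmost or rightmost column of the bounding rectangle, which is the failure of the contact condition for this row.

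I do not expect any hard step here: the argument is essentially a translation between the submatrix-avoidance definition and the geometric statement. The only point that needs to be stated carefully is the interpretation of \emph{contact} used above (touching the left/right side of the bounding rectangle for horizontal runs, and the top/bottom side for vertical runs); once this is fixed the two directions are short and symmetric, and the $H'$-versus-columns case is obtained by exchanging the roles of rows and columns throughout.
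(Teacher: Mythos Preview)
Your argument is correct and is precisely the direct unpacking the paper has in mind: the paper gives no proof at all, stating only that ``the avoidance of $H'$ and $V'$ has a straightforward geometric interpretation, giving immediately'' the proposition. Your care in fixing the intended meaning of \emph{contact} (left/right side for horizontal runs, top/bottom for vertical runs) is appropriate, since the statement is informal on this point and the equivalence fails under the broader reading that a run in an extremal row automatically ``touches'' the bounding rectangle.
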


The avoidance of $H'$ and $V'$ also ensures that in a polyomino of ${\cal C}'$ every connected set of $0$s has the shape of a convex polyomino, which we call -- by abuse of notation -- a \emph{convex $0$-polyomino} (contained) in $P$. Each of these convex  $0$-polyominoes has a minimal bounding rectangle, which  individuates an horizontal (resp. vertical) strip of cells in $P$, where no other convex $0$-polyomino of $P$ can be found. 
Therefore every polyomino $P$ of ${\cal C}'$ can be uniquely decomposed  in regions of two types: rectangles all made of $1$s (of type $A$) or rectangles bounding a convex $0$-polyomino (of type $B$). Then, we can map $P$ onto a quasi-permutation matrix as follows: each rectangle of type $A$ is mapped onto a $0$, and each rectangle of type $B$ is mapped onto a $1$. See an example in Figure~\ref{fig:010}$(b)$. 

Although this representation is clearly non unique, we believe it may be used for the enumeration of ${\cal C}'$. 
For a start, it provides a simple lower bound on the number of polyominoes in ${\cal C}'$ whose bounding rectangle is a square. 

\begin{proposition}\label{prop:stanley_wilf} 
Let $c'_n$ be the number of polyominoes in ${\cal C}'$ whose bounding rectangle is an $n\times n$ square. 
For $n\geq 1$, $c'_n \geq \lfloor \frac{n}{2} \rfloor \, ! \, .$
\end{proposition}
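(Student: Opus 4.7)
The plan is to construct, for each permutation $\sigma \in \sym_k$ with $k = \lfloor n/2 \rfloor$, a distinct polyomino $P_\sigma \in \mathcal{C}'$ whose bounding rectangle is an $n \times n$ square. Since distinct $\sigma$'s will give distinct matrices $P_\sigma$ by construction, this immediately yields $c'_n \geq k! = \lfloor n/2 \rfloor!$.

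The construction I would use is explicit: start from the $n \times n$ binary matrix whose entries are all equal to $1$, and for each $i \in \{1, \ldots, k\}$ replace the entry at position $(2i, 2\sigma(i))$ by a $0$; call $P_\sigma$ the resulting matrix. The positions of the $0$ entries all lie at rows and columns of even index, and together they form the permutation matrix of $\sigma$ rescaled by a factor~$2$; distinct $\sigma$'s therefore produce distinct matrices $P_\sigma$.

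Two things would then need to be checked. First, $P_\sigma$ represents a polyomino whose bounding rectangle is $n \times n$: the $0$ cells sit at pairwise non-edge-adjacent positions (any two distinct positions both with even coordinates differ in at least one coordinate by a value $\geq 2$), so removing these isolated cells from the full $n \times n$ square of $1$'s leaves an edge-connected set; moreover row $1$ and column $1$ consist entirely of $1$'s, while row $n$ and column $n$ each contain at least $n-1$ entries equal to $1$, so the bounding rectangle is the full $n \times n$ square. Second, $P_\sigma$ belongs to $\mathcal{C}'$: each row of $P_\sigma$ contains at most one $0$ entry (none in rows of odd index; in row $2i$, exactly one at column $2\sigma(i)$), and similarly each column contains at most one $0$ since $\sigma$ is a permutation; a row (resp. column) containing at most one $0$ cannot contain the submatrix $V'=[0\ 1\ 0]$ (resp. $H'$), so $P_\sigma$ avoids both $V'$ and $H'$.

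There is no real obstacle here: the construction is entirely explicit and the verifications are elementary. The single idea driving the proof is that scattering the $0$ entries so sparsely that at most one lies in each row and in each column automatically forbids both $V'$ and $H'$, because a singleton set of $0$'s is trivially a contiguous block in its row and its column.
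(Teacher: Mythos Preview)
Your approach is essentially the paper's: expand the permutation matrix of $\sigma$ by a factor of two and place a single $0$ per original $1$-entry, so that every row and every column of the resulting $n\times n$ matrix carries at most one $0$ and hence cannot contain $V'$ or $H'$. One small caveat on the connectivity step: the $0$ cells being pairwise non-edge-adjacent is not by itself enough to ensure that removing them from the full square leaves a connected set (remove the four corners and the center of a $3\times 3$ grid); the correct reason in your construction is that all odd-indexed rows and columns remain entirely filled with $1$'s, giving a connected backbone to which every remaining $1$-cell is adjacent.
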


\begin{proof}
The statement directly follows from a mapping from permutations of size  $m \geq 1$ to polyominoes in ${\cal C}'$ whose bounding rectangle is an $2m\times 2m$ square, and defined as follows.  
From a permutation $\pi$, we replace every entry of its permutation matrix by a $2\times 2$ matrix according to the following rules:
Every $0$ entry is mapped onto a $2 \times 2$ matrix of type $A$, while 
every $1$ entry is mapped onto ${\footnotesize \left[\begin{array}{cc}
          1 & 1\\
          1 & 0 
         \end{array}\right]}$. 
This mapping (illustrated in Figure~\ref{fig:inv}) guarantees that the set of cells obtained is connected (hence is a polyomino), and avoids the submatrices $H'$ and $V'$, concluding the proof.
\end{proof}

\begin{figure}[htd]
\begin{center}
\includegraphics[width=8cm]{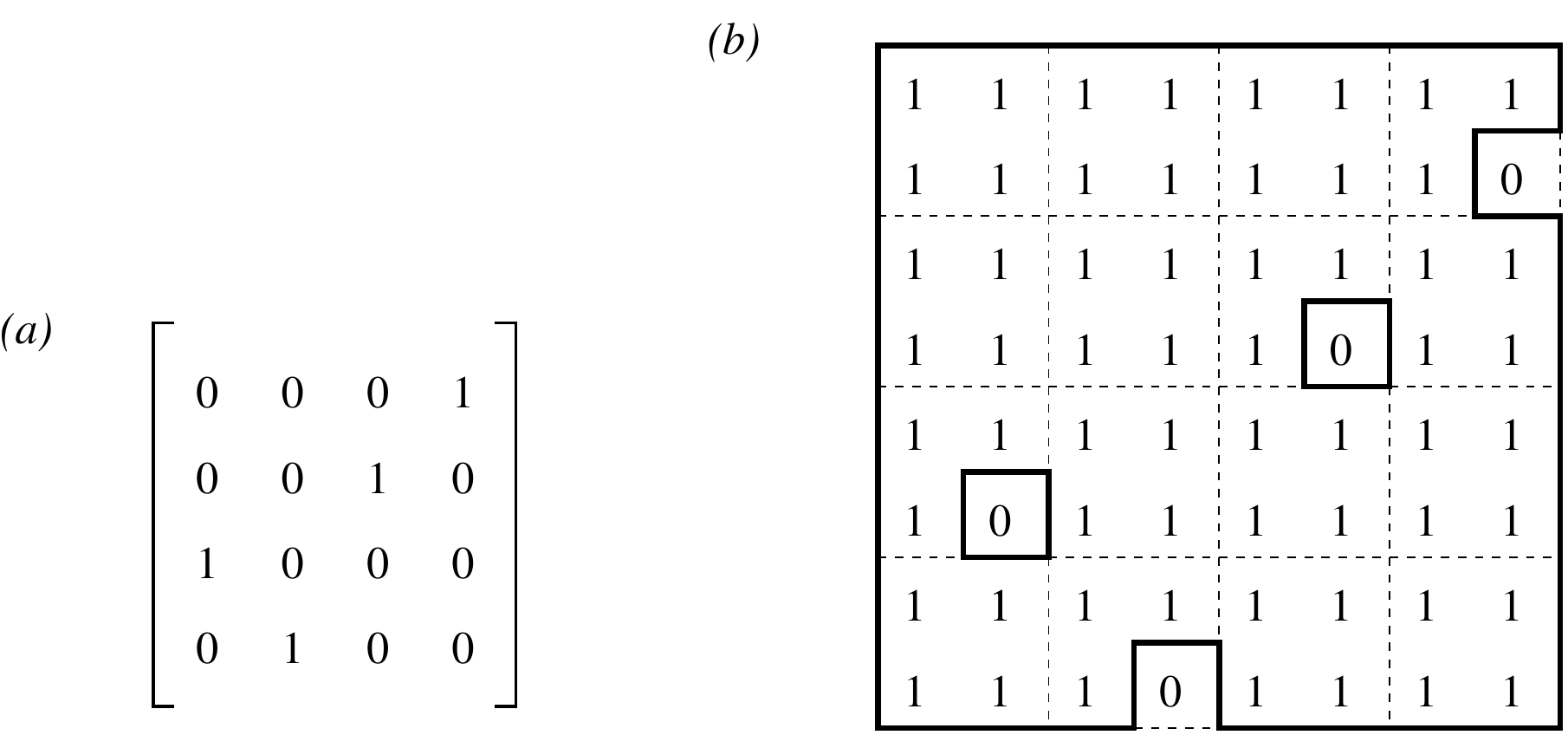}
\caption{$(a)$ a permutation matrix of dimension $4$; $(b)$ the corresponding polyomino of dimension $8$ in ${\cal C}'$.}
\label{fig:inv}
\end{center}
\end{figure}

\section{Some directions for future research}
\label{sec:further_research}

Because it introduces the new approach of submatrix avoidance in the study of permutation and polyomino classes, 
our work opens numerous and various directions for future research. 

\smallskip

Both in the case of permutation and polyomino classes, 
we have described several notions of bases for these classes: $p$-basis, $m$-bases, canonical $m$-basis, minimal $m$-bases. 
Section~\ref{sec:from_one_basis_to_another} explains how to describe the $p$-basis from any $m$-basis. 
Conversely, we may ask how to transform the $p$-basis into an ``efficient'' $m$-basis. Of course, the $p$-basis is itself an $m$-basis, but we may wish to describe the canonical one, or a minimal one. 

Many questions may also be asked about the canonical and minimal $m$-bases themselves. 
For instance: 
When does a class have a unique minimal $m$-basis? 
Which elements of the canonical $m$-basis may belong to a minimal $m$-basis? 
May we describe (or compute) the minimal $m$-bases from the canonical $m$-basis? 

Finally, we can study the classes for which the $p$-basis is itself a minimal $m$-basis of the class (see the examples of the polyomino classes of vertical bars, or of parallelogram polyominoes). 

\smallskip

Submatrix avoidance in permutation classes has allowed us to derive a statement (Corollary~\ref{cor:WE}) from which infinitely many Wilf-equivalences follow. 
Such general results on Wilf-equivalences are rare in the permutation patterns literature, and it would be interesting to explore how much further we can go in the study of Wilf-equivalences with the submatrix avoidance approach.

\smallskip

The most original concept of this article is certainly the introduction of the polyomino classes, which opens many directions for future research. 

One is a systematic study of polyomino classes defined by pattern avoidance. 
Because enumeration is the biggest open question about polyominoes, we should study the enumeration of such classes, and see whether some interesting bounds can be provided. Notice that the Stanley-Wilf-Marcus-Tardos theorem~\cite{marcTard} on permutation classes implies that the permutations in any given class represent a negligible proportion of all permutations. We don't know if a similar statement holds for polyomino classes. 

As we have reported in Section~\ref{sec:papc}, the poset $({\poly}, \polypattern )$ of polyominoes was introduced in \cite{CR}, where the authors proved that it is a ranked poset, and contains infinite antichains. There are however some combinatorial and algebraic properties of this poset which are still to explore, in particular w.r.t. characterizing some simple intervals in this poset.  

\smallskip

Finally, we have used binary matrices to import some questions on permutation classes to the context of polyominoes. But a similar approach could be applied to any other family of combinatorial objects which are represented by binary matrices. 

\paragraph*{Acknowledgments.}
We would like to thank Valentin F\'eray, Samanta Socci and Laurent Vuillon for helpful discussions on the topics of this paper. 
Many thanks also to the anonymous referee for carefully reading our work, and for pointing out some inaccuracies that have now been fixed. 
Part of the work in this paper was supported by the software suite \emph{PermLab}~\cite{PermLab1.0}.

\end{document}